\title[FSI in the LP--formalism]{Fluid-structure interaction in the Lagrange-Poincar\'e formalism: the Navier-Stokes and inviscid regimes}
\author{Henry O. Jacobs} 
\email{hoj201@gmail.com}
\author{Joris Vankerschaver}
\email{Joris.Vankerschaver@gmail.com}
\keywords{fluid mechanics, solid mechanics, fluid-structure interaction, Lie groups}
\subjclass[2010]{76T99, 53Z05, 22A22, 22E70}
\address{Department of Mathematics, Imperial College London, London
  SW7 2AZ, United Kingdom}
\definecolor{dullmagenta}{rgb}{0.4,0,0.4}   
\definecolor{darkblue}{rgb}{0,0,0.4}
\newtheorem{thm}{Theorem}[section]
\newtheorem{prop}{Proposition}[section]
\newtheorem{definition}{Definition}[section]
\theoremstyle{remark}
\newtheorem{example}[thm]{Example}
\theoremstyle{remark}
\newtheorem{remark}[thm]{Remark}
\newcommand{\qedremark}{\hfill $\diamond$}
\DeclareMathOperator{\SDiff}{SDiff}
\DeclareMathOperator{\SE}{SE}
\DeclareMathOperator{\ad}{ad}
\DeclareMathOperator{\Ad}{Ad}
\DeclareMathOperator{\vol}{vol}
\DeclareMathOperator{\hor}{hor}
\DeclareMathOperator{\Emb}{Emb}
\DeclareMathOperator{\ns}{ns}
\newcommand{\pder}[2]{\ensuremath{ \frac{ \partial #1 }{\partial #2 }
  } }
\newcommand{\body}{\ensuremath{\mathcal{B}}}
\newcommand{\bulk}{\ensuremath{\aquarius}}
\newcommand{\into}{\ensuremath{\hookrightarrow}}
\newcommand{\restr}[2]{ \ensuremath{ \left. #1 \right|_{#2} } }
\newcommand{\lb}{ \ensuremath{\langle}}
\newcommand{\rb}{ \ensuremath{\rangle}}
\begin{document}
\begin{abstract}
In this paper, we derive the equations of motion for an elastic body interacting with a perfect fluid via the framework of Lagrange-Poincar\'e reduction.
We model the combined fluid-structure system as a geodesic curve on the total space of a principal bundle on which a diffeomorphism group acts.
After reduction by the diffeomorphism group we obtain the fluid-structure interactions where the fluid evolves by the inviscid fluid equations.
Along the way, we describe various geometric structures appearing in fluid-structure interactions: principal connections, Lie groupoids, Lie algebroids, etc.
We finish by introducing viscosity in our framework as an external force and adding the no-slip boundary condition.
The result is a description of an elastic body immersed in a Navier-Stokes fluid as an externally forced Lagrange-Poincar\'e equation.
Expressing fluid-structure interactions with Lagrange-Poincar\'e theory provides an alternative to the traditional description of the Navier-Stokes equations on an evolving domain.
\end{abstract}

\maketitle


\section{Introduction}

In this paper, we consider the motion of an elastic structure interacting with an incompressible fluid.
We derive the equations of motion using Lagrangian reduction \cite{MaSc1993a, MaSc1993b, CeMaRa2001} and discuss various geometric structures that arise in fluid-structure interactions.
While we focus primarily on inviscid flows, we also indicate how our framework may be modified to incorporate viscosity.
By expressing fluid-structure problems in the Lagrange-Poincar\'e formalism, differential geometers can communicate about these problems using notions from Riemannian and symplectic geometry.

\subsection{The bundle picture for fluid-structure interactions}

In his foundational work \cite{Arnold1966}, V. I. Arnold showed that the motion of an ideal, incompressible fluid in a fixed container $M$ may be described by (volume-preserving) diffeomorphisms from $M$ to itself.
When a moving structure is present in the fluid, this description is no longer applicable, since the fluid container changes with the motion of the structure.
In our paper, we describe the fluid-structure configuration space instead as a \emph{principal fiber bundle} whose base space is the configuration space for the elastic medium, while the fibers over each point are copies of the diffeomorphism group, which specify the configuration of the fluid.
This point of view is adapted from a similar description in \cite{LeMaMoRa1986, Radford_thesis, Vankerschaver2009}.

In remark \ref{rmk:groupoid} we show that the bundle picture is equivalent to a description involving a certain Lie \emph{groupoid} of diffeomorphisms.
In particular, our principal bundle can be seen as a source fiber of this Lie groupoid, and this provides a nice analogy with the work of Arnold: whereas the motion of an ideal incompressible fluid in a fixed container $M$ is described by curves on the volume preserving diffeomorphism \emph{group}, $\SDiff(M)$, motions of a fluid-structure system (or a fluid with moving boundaries) are described by curves on a volume-preserving diffeomorphism \emph{groupoid}.

\subsection{Derivation of the fluid-structure equations} 
The bulk of our paper is devoted to the derivation of the fluid-structure equations using the framework of \emph{Lagrange-Poincar\'e reduction}.  In section \ref{sec:Lagrangian}, we describe fluid-structure interactions as a Lagrangian mechanical system on a tangent bundle, $TQ$, and we observe that this system has the group of particle relabelings, $G$, as its symmetry group in Proposition \ref{prop:symmetry} (recall that a particle relabeling map is a volume-preserving diffeomorphism of the reference configuration of the fluid).
Upon studying the variational structure in section \ref{sec:variations} we are able to factor out by this symmetry in section \ref{sec:LP}.
As a result, we obtain a Lagrange-Poincar\'e system on $P = TQ/G$ in Theorem \ref{thm:main1}.
In Theorem \ref{thm:main2} we show that these equations are equivalent to the conventional fluid-structure equations, wherein the fluid satisfies the standard inviscid fluid equations.

We briefly repeat this process for viscous fluids by adding an external viscous friction force to the system and incorporating a no-slip boundary condition.
The resulting Lagrange-Poincar\'e equations are equivalent to the conventional fluid-structure equations given by (see for instance \cite[Section 3.3, Section 5.14]{Batchelor}):
\begin{align}
    \label{eq:EofM1}
    \frac{D}{Dt} \left( \pder{L_B}{\dot{b}} \right) -
    \pder{L_B}{b} &= f_\body, \\
    \label{eq:EofM2}
    \pder{u}{t} + u \cdot \nabla u &= \nu \nabla^2 u - \nabla p, \quad \nabla \cdot u = 0.
\end{align}
Here, $\nu$ is the viscosity coefficient, while $u$ and $p$ are the Eulerian velocity and the pressure fields.  The function, $L_B$, is the Lagrangian of the body and $f_\body$ is the force exerted by the fluid on the
body.
In particular, $f_\body$ is implicitly given by the expression for virtual work
\begin{align*}
  f_B \cdot \delta b = - \int_{\partial \body}{ \delta b \cdot \mathbf{T} \cdot n \, d^2 x} \quad , \quad \forall \delta b
\end{align*}
where $\mathbf{T} = - p \mathbf{I} + \nu \left( \nabla u + \nabla u^T \right)$ is the stress tensor of the fluid and $n(x)$ is the unit external normal to the boundary of the body. 

While it is well-known that fluid-structure systems exhibit fluid particle relabeling symmetry, it is to the best of our knowledge the first time that these equations have been derived through Lagrange-Poincar\'e reduction.
Nonetheless, similar efforts have been made, and this ``gauge theoretic'' flavor to fluid mechanics has been adopted in a number of instances.
For example, \cite{ShapereWilczek1989} developed a gauge theory for swimming at low Reynolds numbers.
This gauge theoretic picture of swimming was then applied to potential flow in \cite{Kelly1998}.
Building upon \cite{Kelly1998}, the equations of motion for an articulated body in potential flow were derived as Lagrange-Poincar\'{e} equations (with zero momentum) in \cite{Kanso2005} (see also \cite{Vankerschaver2009}).
Similarly, \cite{Radford_thesis} generalized the work of \cite{Kelly1998} to arbitrary flows and adopted the Lagrange-Poincar\'{e} equations as valid equations of motion under the assumption that they were equivalent to \eqref{eq:EofM1} and \eqref{eq:EofM2}.

\subsection{Conventions}
We will assume the reader is familiar with manifolds and vector bundles, and we refer to \cite{MTA, KoNo1963, KMS99} for a comprehensive overview.
All maps will be assumed smooth throughout this paper. Given any manifold, $M$, its
tangent bundle will be denoted by $\tau_M:TM \to M$ or by $TM$ for short.  Additionally,
given a map $f:M \to N$, the tangent lift will be given by $Tf:TM \to
TN$. We denote the set of $k$-forms on $M$ by $\Omega^k(M)$.  Given a
vector bundle $\pi_E:E \to M$ we denote the set of $E$-valued
$k$-forms by $\Omega^k(M;E) \equiv \Gamma(E) \otimes \Omega^k(M)$.  If
$\alpha \in \Omega^k(M)$, the exterior
derivative of $\alpha$ will be denoted $d\alpha$ and similarly we may
define the exterior derivative of a $E$-valued form by $d( e \otimes
\alpha) = e \otimes d\alpha$ where $e \in \Gamma(E)$.  Lastly, pullback and
pushforward by a map, $\varphi: M \to N$, will be denoted by $\varphi^*$
and $\varphi_*$ respectively.

\section{Lagrangian mechanics in the infinite Reynolds regime} \label{sec:Lagrangian}
Let $M$ be a Riemannian manifold, with $\| \cdot \|$ the norm associated to the Riemannian metric and $\mu( \cdot )$ the Riemannian volume form.
Viable candidates for $M$ include compact manifolds with and without boundaries, as well as $\mathbb{R}^d$ \cite{Schwarz1995,Troyanov2009}.
Let $\body$ be a compact manifold and let $\Emb(\body ; M)$ denote the set of embeddings of $\body$ into $M$.
The manifold $\Emb(\body ; M)$ will be used to describe the configuration of the body immersed in $M$.
The tangent fiber $T_b \Emb(\body ; M)$ over a $b \in \Emb(\body ; M)$ is given by an embedding $v_b : \body \into TM$ such that $\left. v_b \circ b^{-1} \right|_{b(\body)} : b(\body) \to T_{b(\body)}M$ is a vector-field over $b(\body)$.
The configuration manifold for the body is a submanifold $B \subset \Emb(\body ; M)$, and the Lagrangian of the body is a function $L_B: TB \to \mathbb{R}$.

\begin{figure}[ht]
   \centering
   \includegraphics[width=0.4\textwidth]{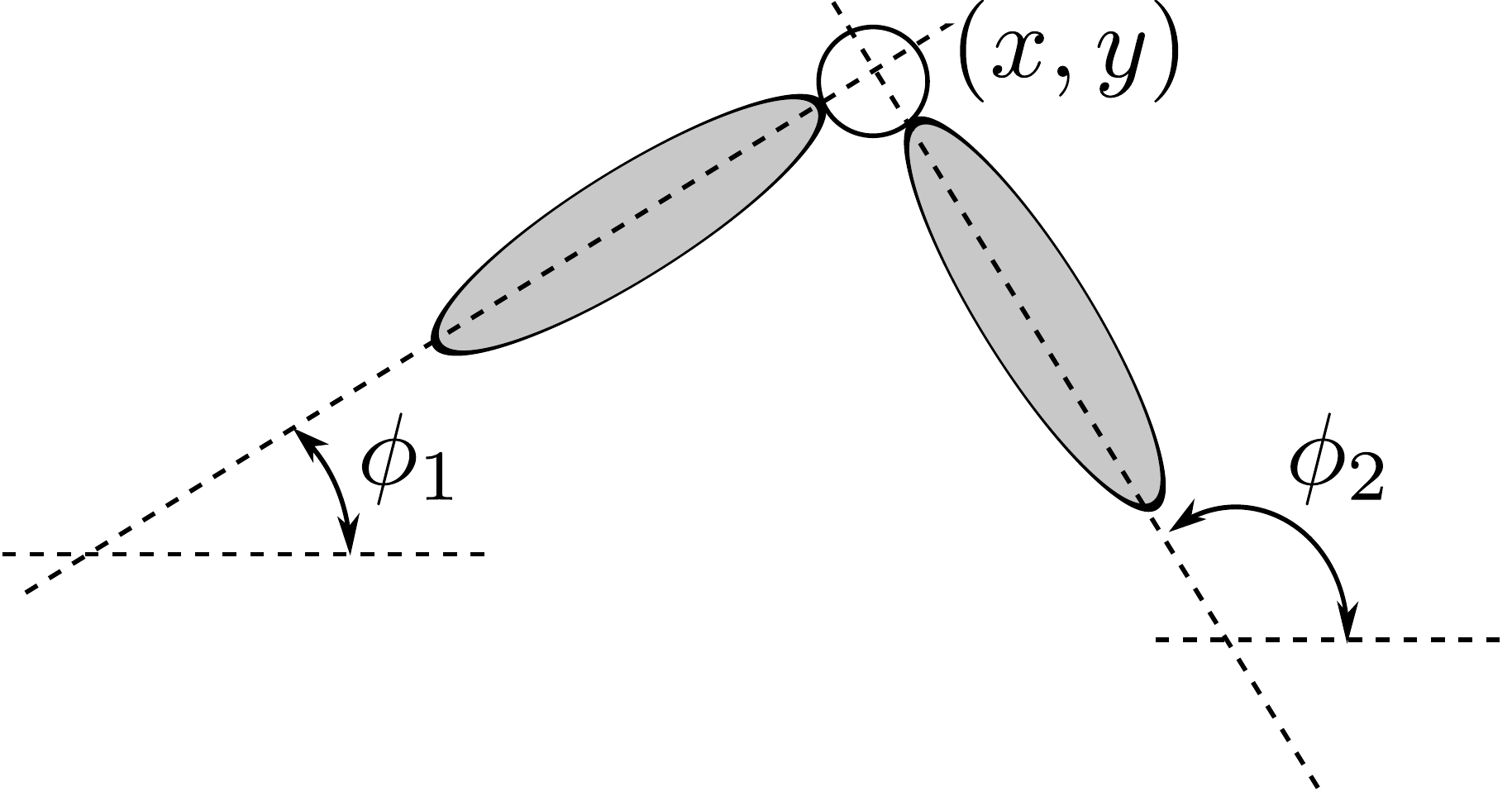} 
   \caption{Two rigid bodies connected by a hinge and immersed in $\mathbb{R}^2$.}
   \label{fig:two_link}
\end{figure}

\begin{example}
  Let $M = \mathbb{R}^3$.  The theory of linear elasticity assumes $\body$ to be a Riemannian manifold with a mass density $\rho( \cdot ) \in \bigwedge^3 (\body)$ and metric $\langle \cdot , \cdot \rangle_{\body}: T\body \oplus T\body \to \mathbb{R}$.
  Here the configuration manifold is $B = \Emb(\body ; \mathbb{R}^3)$ and the potential energy
  \[
  	U(b) = \frac{1}{2} \int_{b(\body)}{ \mathrm{trace}\left( [I - C_b]^T \cdot [I - C_b] \right) dx \wedge dy \wedge dz}.
  \]
  where $C_b$ is the push-forward of the metric $\lb \cdot , \cdot \rb_{\body}$ by $b:\body \hookrightarrow \mathbb{R}^d$, a.k.a. the \emph{right Cauchy-Green strain tensor}.
  The $\SE(d)$-invariant kinetic energy, $K:TB \to \mathbb{R}$, is given by
  \[
  	K(b,\dot{b}) = \frac{1}{2} \int_{\body} \big \| \dot{b}(x) \big \|^2 \rho(x).
  \]
  See \cite{MFOE} for more details. \qedremark
  \end{example}
  
  \begin{example} \label{ex:two_link}
 Consider two rigid bodies in $\mathbb{R}^2$ joined by a hinge.
  The configuration manifold, $B$, consists of rigid embeddings of the two links into $\mathbb{R}^2$ such that the embeddings respect the constraint that the links be joined at the hinge (see Figure \ref{fig:two_link}).
  In particular $B$ is isomorphic to a subset of $S^1 \times S^1 \times \mathbb{R}^2$ if we let the tuple $(\phi_1,\phi_2,x,y ) \in S^1 \times S^1 \times \mathbb{R}^2$ denote a configuration where $\phi_1,\phi_2 \in S^1$ are the angles between the links and the $x$-axis, while $(x,y) \in \mathbb{R}^2$ is the location of the hinge.
  
  We may consider a potential energy derived from a linear spring between the hinges given by $U( \phi_1, \phi_2, x,y) = \frac{k}{2} ( \phi_1 - \phi_2 - \bar{\phi} )^{2}$ for some $\bar{\phi} \in S^1$.
  The kinetic energy of the $i^{\rm th}$ body is
  \[
  	K_i = \frac{I_i}{2} \dot{\phi}_i^2 + \frac{M_i}{2} \left( [\dot{x} - \sin(\phi_i) \dot{\phi_i}]^2 + [\dot{y} + \cos(\phi_i) \dot{\phi_i} ]^2 \right)
  \]
  where $M_i$ and $I_i$ are the mass and rotational inertial of the $i^{\rm th}$ body.  The Lagrangian is therefore $L_B = K_1 + K_2 - U$. \qedremark
\end{example}

For a given $b \in B$ the fluid domain is given by the set
\[
	\bulk_b := \mathrm{closure} \{ x \in M : x \notin b(\body)\}.
\]
Fix a reference configuration $b_{\rm ref} \in B$ and define the reference location of the body by $\body_{\rm ref} := b_{\rm ref}(\body) \subset M$ and the reference container for the fluid by $\bulk_{\rm ref} = \bulk_{b_{\rm ref}} \subset M$.
If the body is described by configuration $b \in B$, then the configuration of the fluid is given by a volume preserving diffeomorphism from $\bulk_{\rm ref}$ to $\bulk_{b}$.
We denote the set of volume preserving diffeomorphisms from $\bulk_{\rm ref}$ to $\bulk_{b}$ by $\SDiff( \bulk_{\rm ref}; \bulk_{b})$.
The configuration manifold for the system is
\begin{equation} \label{fsi_config}
	Q := \{ (b ,\varphi) \mid b \in B , \varphi \in \SDiff(\bulk_{\rm ref} ; \bulk_b ) \}.
\end{equation}
Note that for each $(b,\varphi) \in Q$ both $b \circ \left. b_{\rm ref}^{-1} \right|_{\body_{\rm ref}} : \body_{\rm ref} \to b(\body) \subset M$ and $\varphi: \bulk_{\rm ref} \to \bulk_b$ take the boundary $\partial \bulk_{\rm ref} = \partial \body_{\rm ref}$ to the boundary $\partial \bulk_b = \partial \{ b(\body) \}$.
However, it is generally not the case that $b \circ \left. b_{\rm ref}^{-1} \right|_{\body_{\rm ref}}$ maps the boundary to the same points as $\varphi$.
This allows the fluid to slip along the boundary, as is permissible in the infinite Reynolds regime.

Finally, let $G = \SDiff( \bulk_{\rm ref} ; \bulk_{\rm ref})$.  Note that $G$ is a Frechet Lie group with group multiplication given by composition of diffeomorphisms. 
The Lie algebra of $G$ is the set $\mathfrak{X}_{\mathrm{div}}(\bulk_{\mathrm{ref}})$ of divergence-free vector fields on the reference space $\bulk_{\mathrm{ref}}$. The Lie bracket on $\mathfrak{X}_{\mathrm{div}}(\bulk_{\mathrm{ref}})$ will be denoted by $[ \cdot, \cdot ]$ and is the negative of the usual Jacobi-Lie bracket of vector fields: $[u, v] = - [u, v]_{JL}$.
The following proposition can be observed directly.

\begin{prop}
	For each $\psi \in G$ the map $(b , \varphi) \in Q  \mapsto (b , \varphi \circ \psi) \in Q$
	defines a right Lie group action of $G$ on $Q$.
	This action endows $Q$ with the structure of a right principal $G$-bundle.
	Moreover, $B = Q / G$ and the bundle projection is $\pi_B^Q: (b,\varphi) \in Q \mapsto b \in B$.
\end{prop}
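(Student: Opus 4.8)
The plan is to verify the group-action axioms, show that the action is free with orbits equal to the fibers of $\pi_B^Q$, and finally exhibit smooth local trivializations. First I would check well-definedness: given $(b,\varphi)\in Q$ and $\psi\in G$, the composite traverses $\bulk_{\rm ref}\xrightarrow{\psi}\bulk_{\rm ref}\xrightarrow{\varphi}\bulk_b$, and since both factors are volume-preserving diffeomorphisms so is $\varphi\circ\psi$; hence $(b,\varphi\circ\psi)\in Q$. The right-action axioms are then immediate from the fact that the group law on $G$ is composition of diffeomorphisms: writing $e=\mathrm{id}$ for the identity, $(b,\varphi)\cdot e=(b,\varphi\circ e)=(b,\varphi)$, while $((b,\varphi)\cdot\psi_1)\cdot\psi_2=(b,\varphi\circ\psi_1\circ\psi_2)=(b,\varphi)\cdot(\psi_1\psi_2)$. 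Smoothness of the action map $Q\times G\to Q$ follows because composition of diffeomorphisms is smooth in the Frechet category.

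Next I would treat freeness and the orbit structure. Freeness is direct: if $(b,\varphi\circ\psi)=(b,\varphi)$ then $\varphi\circ\psi=\varphi$, and left-composing with $\varphi^{-1}$ yields $\psi=\mathrm{id}$. For the orbits, fix $b\in B$; the fiber of $\pi_B^Q$ over $b$ is $\{b\}\times\SDiff(\bulk_{\rm ref};\bulk_b)$. Given any two elements $\varphi,\varphi'$ of this fiber, the map $\psi:=\varphi^{-1}\circ\varphi'$ lies in $G$ and satisfies $\varphi\circ\psi=\varphi'$, so $G$ acts transitively on each fiber. Each fiber of $\pi_B^Q$ is therefore a single $G$-orbit, i.e.\ a right $G$-torsor, the orbit map coincides with $\pi_B^Q$, and consequently $Q/G\cong B$ with $\pi_B^Q$ as the bundle projection.

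The remaining and principal difficulty is local triviality: for each $b_0\in B$ I must produce a neighborhood $U\subset B$ and a smooth section $s:U\to Q$ of $\pi_B^Q$, after which $(b,\psi)\mapsto s(b)\cdot\psi$ gives a local trivialization $U\times G\to(\pi_B^Q)^{-1}(U)$ compatible with the $G$-action. The obstacle is that $G$, $Q$, and the fibers $\SDiff(\bulk_{\rm ref};\bulk_b)$ are infinite-dimensional Frechet manifolds, so this does not follow from a finite-dimensional slice theorem and instead requires genuine analytic input. The natural construction is to build a smooth family $b\mapsto\chi_b\in\SDiff(\bulk_{b_0};\bulk_b)$ with $\chi_{b_0}=\mathrm{id}$ depending smoothly on $b$ near $b_0$ --- for instance by extending the prescribed deformation of the boundary $\partial\body_{b_0}$ into the interior and then correcting the Jacobian determinant to restore volume-preservation via a Moser-type argument --- and then setting $s(b)=(b,\chi_b\circ\varphi_0)$ for a fixed $\varphi_0\in\SDiff(\bulk_{\rm ref};\bulk_{b_0})$. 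I expect essentially all the content of the proposition to reside in this last step, the preceding verifications being formal; this is presumably why the authors can assert that the statement ``can be observed directly'' once the underlying Frechet-manifold structures are granted.
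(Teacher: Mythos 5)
The paper offers no proof of this proposition at all --- it is introduced with the words ``can be observed directly'' --- so there is no argument to compare against step by step; your proposal supplies what the authors left implicit. Your formal verifications (well-definedness of $\varphi\circ\psi$, the right-action axioms, freeness, transitivity of $G$ on each fiber of $\pi_B^Q$, hence $Q/G = B$) are exactly the ``direct observations'' the authors have in mind, and your identification of local triviality as the one substantive analytic step is correct: in the Fr\'echet setting this is where all the content lies, and the construction you sketch --- a smooth family of diffeomorphisms $\chi_b$ obtained by extending the boundary displacement into the bulk and then applying a Moser correction, followed by $s(b) = (b, \chi_b\circ\varphi_0)$ --- is the standard device in this literature (it is essentially how local sections of $\SDiff$-bundles are produced in the Ebin--Marsden tradition, and how the Neumann connection of \cite{LeMaMoRa1986, Vankerschaver2009} is constructed).

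One caveat you should make explicit: a Moser argument can only restore volume-preservation when the total volumes agree, i.e. when $\mathrm{vol}(\bulk_b) = \mathrm{vol}(\bulk_{b_0})$ for $b$ near $b_0$. Equivalently, the fiber $\SDiff(\bulk_{\rm ref};\bulk_b)$ is nonempty only if the configurations in $B$ all enclose the same volume; otherwise $\pi_B^Q$ fails to be surjective and the claim $B = Q/G$ is false as stated. This is not a defect of your argument alone --- the proposition itself tacitly assumes it, and the authors acknowledge precisely this issue in Remark \ref{rmk:groupoid}, where the groupoid $\mathcal{G}$ fails to be transitive ``if $B$ contains embeddings of different volumes.'' So your proof is correct under the hypothesis, implicit in the statement, that $B$ consists of configurations of constant fluid volume; it would strengthen the write-up to say so, and to note that for noncompact $M$ (e.g. $M = \mathbb{R}^d$, which the paper allows) the Moser step additionally requires decay or compact-support control on the correction.
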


The tangent bundle of $Q$ is given by quadruples $(b,\dot{b},\varphi, \dot{\varphi})$ where $(b,\dot{b}) \in TB$, $(b,\varphi) \in Q$ and $\dot{\varphi} : \bulk_{\rm ref} \to T \{ \bulk_b \}$ is such that $\dot{\varphi} \circ \varphi^{-1} : \bulk_b \to T\{ \bulk_b \}$ is a divergence free vector field over $\bulk_b$ (although not generally tangential to the boundary).

\begin{prop} \label{prop:symmetry}
	For each $\psi \in G$ the tangent lift of the action of $G$ on $Q$ is given by
	\[
		(b , \dot{b}, \varphi, \dot{\varphi}) \in TQ  \mapsto (b , \dot{b}, \varphi \circ \psi , \dot{\varphi} \circ \psi ) \in TQ.
	\]
	The quotient space $P := TQ /G$ is given by
	\begin{align}
		P = \left\{ (b,\dot{b},u) \left \arrowvert \begin{array}{l} (b,\dot{b}) \in TB , u \in \mathfrak{X}_{\rm div}( \bulk_b ) \\
			\left. \left( u- \dot{b} \circ b^{-1}  \right) \right|_{\partial \bulk_b} \in \mathfrak{X}( \partial \bulk_b)
			\end{array} \right. \right\}. \label{eq:P}
	\end{align}
	This action endows $TQ$ with the structure of a right principal $G$-bundle with the quotient projection $\pi_{P}^{TQ}: TQ \to P$ described by $\pi^{TQ}_P( b,\dot{b},\varphi, \dot{\varphi}) = (b,\dot{b},u)$ where $u = \dot{\varphi} \circ \varphi^{-1}$.
\end{prop}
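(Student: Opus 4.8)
The plan is to dispatch the three assertions in turn: the formula for the tangent lift, the identification of $P$ with $TQ/G$ together with its characterizing boundary condition, and finally the principal bundle structure. Throughout I will regard a tangent vector $(b,\dot b,\varphi,\dot\varphi)$ as represented by a curve $t\mapsto(b_t,\varphi_t)$ in $Q$.

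The tangent-lift formula is a direct computation. Since the action $\Phi_\psi(b,\varphi)=(b,\varphi\circ\psi)$ does not touch $b$, the $(b,\dot b)$ component is unchanged, and because right composition by the fixed $\psi$ commutes with differentiation along curves, $\frac{d}{dt}\big|_{0}(\varphi_t\circ\psi)(x)=\dot\varphi(\psi(x))$. Hence $T\Phi_\psi$ sends $\dot\varphi\mapsto\dot\varphi\circ\psi$, as claimed.

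The heart of the matter is the analysis of $\pi^{TQ}_P(b,\dot b,\varphi,\dot\varphi)=(b,\dot b,u)$ with $u=\dot\varphi\circ\varphi^{-1}$, for which I would verify four points. \emph{(i) The image lies in $P$.} That $u$ is divergence-free on $\bulk_b$ is built into the description of $TQ$ recalled before the statement, so the content is the boundary condition. Here I argue geometrically: a fluid point $p\in\partial\bulk_{\rm ref}$ traces $t\mapsto\varphi_t(p)$ on the moving interface $\partial\bulk_{b_t}$ with velocity $u(x)$ at $x=\varphi(p)$, while a body point $q$ with $b(q)=x$ traces $t\mapsto b_t(q)$ on the same interface with velocity $(\dot b\circ b^{-1})(x)$. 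Writing the interface locally as a level set $g(t,\cdot)=0$ and differentiating $g(t,\gamma(t))=0$ shows that every such curve has the same normal velocity $-\partial_t g/|\nabla g|$; hence $u-\dot b\circ b^{-1}$ has vanishing normal component along $\partial\bulk_b$, i.e. $\left.(u-\dot b\circ b^{-1})\right|_{\partial\bulk_b}\in\mathfrak{X}(\partial\bulk_b)$. \emph{(ii) $G$-invariance:} replacing $(\varphi,\dot\varphi)$ by $(\varphi\circ\psi,\dot\varphi\circ\psi)$ leaves $(\dot\varphi\circ\psi)\circ(\varphi\circ\psi)^{-1}=\dot\varphi\circ\varphi^{-1}=u$ unchanged. \emph{(iii) Fibers equal orbits:} if two elements share the same image, then $b,\dot b$ and $u$ agree, and setting $\psi=\varphi_1^{-1}\circ\varphi_2\in G$ (well defined since both $\varphi_i$ have target $\bulk_b$) one checks $\varphi_2=\varphi_1\circ\psi$ and $\dot\varphi_1\circ\psi=u\circ\varphi_2=\dot\varphi_2$. \emph{(iv) Surjectivity:} given $(b,\dot b,u)\in P$, choose any $\varphi\in\SDiff(\bulk_{\rm ref};\bulk_b)$ and set $\dot\varphi=u\circ\varphi$; this quadruple lies in $TQ$ precisely because the boundary condition defining $P$ is exactly the compatibility needed for $\dot\varphi$ to arise from a curve in $Q$ whose interface moves consistently with $\dot b$. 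Together (i)--(iv) yield a $G$-invariant bijection $TQ/G\cong P$ realized by $\pi^{TQ}_P$.

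Finally, for the principal bundle structure I would note that the lifted action is free, since $\varphi\circ\psi=\varphi$ forces $\psi=\mathrm{id}$, and that local triviality and properness are inherited from the principal bundle $Q\to B$ of the preceding proposition by applying the tangent functor to its local trivializations. This promotes the set-theoretic identification to an isomorphism of right principal $G$-bundles. I expect the main obstacle to be steps (i) and (iv): pinning down that the boundary condition is both necessary (because the interface is a shared material surface) and sufficient (for lifting back to $TQ$), since this is the only place where the moving-domain geometry genuinely enters and where care with the infinite-dimensional tangent spaces is required.
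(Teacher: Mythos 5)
The paper offers no proof of Proposition \ref{prop:symmetry} to compare against---like the proposition preceding it, it is stated as a direct observation---so the only question is whether your write-up correctly supplies the omitted argument, and it essentially does. Items (ii) and (iii) (invariance of $u=\dot\varphi\circ\varphi^{-1}$, and fibers equal orbits via $\psi=\varphi_1^{-1}\circ\varphi_2$) together with freeness are the routine algebra, and your level-set argument in (i) is the standard justification of the no-penetration condition that the paper only names in the remark after the proposition: since elements of $TQ$ are derivatives of curves in $Q$, the fluid boundary and the body boundary are one and the same moving material surface, which forces equal normal velocities. The one place where you assert rather than argue is (iv): to conclude that $(b,\dot b,\varphi,u\circ\varphi)$ lies in $TQ$ you must exhibit a curve in $Q$ through $(b,\varphi)$ with this derivative, not just note that the boundary condition is the right compatibility. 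This gap is closable with a construction the paper itself uses later (the horizontal deformations of Section \ref{sec:variations}): choose a curve $b_t$ with $b_0=b$, $\dot b_0=\dot b$; let $\phi_t:\bulk_b\to\bulk_{b_t}$ be the flow of $u^H_t=\sigma(b_t,\dot b_t)$, where $\sigma$ is for instance the Neumann section \eqref{neumann_section}; let $\psi_t$ be the flow on $\bulk_b$ of the time-independent field $\xi:=u-\sigma(b,\dot b)$, which is a volume-preserving diffeomorphism of $\bulk_b$ precisely because the boundary condition in \eqref{eq:P} makes $\xi$ tangent to $\partial\bulk_b$; then $\varphi_t:=\phi_t\circ\psi_t\circ\varphi$ is a curve in $Q$ with $\varphi_0=\varphi$ and
\[
\left.\frac{d}{dt}\right|_{t=0}\varphi_t \;=\; \sigma(b,\dot b)\circ\varphi+\xi\circ\varphi \;=\; u\circ\varphi.
\]
With that supplied (and with local triviality of $TQ\to P$ obtained, as you indicate, by tangent-lifting trivializations of $Q\to B$ and then right-trivializing the $TG$ factor via $\dot g\mapsto \dot g\circ g^{-1}$), your argument is complete at the same formal level of rigor the paper adopts for its infinite-dimensional claims.
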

The boundary condition in \eqref{eq:P} is called the ``no-penetration condition'', and prevents fluid from entering the domain of the body.

\begin{remark} \label{rmk:groupoid}
One can alternatively view the natural space of fluid structure interaction to be a Lie groupoid.
This perspective can be interpreted as more natural, in the sense that it requires no choice of a reference configuration.
To begin, consider the Frechet Lie groupoid
\[
	\mathcal{G} = \{ (b_1, \varphi, b_0) \mid b_1,b_0 \in B, \varphi \in \SDiff( \bulk_{b_0}; \bulk_{b_1}) \}
\]
equipped with the source map $\alpha( b_1, \varphi, b_0) = b_0$, target map $\beta(b_1,\varphi,b_0) = b_1$, and composition $(b_2, \psi, b_1) \circ (b_1, \varphi, b_0) = (b_2, \psi \circ \varphi, b_0)$.
If $\mathcal{G}$ is transitive then the Lie algebroid of $\mathcal{G}$ is $P$ and $\mathcal{G}$ is the Atiyah groupoid associated to the principal bundle $\pi^Q_B:Q \to B$.
However, it is not generally the case that $\mathcal{G}$ acts transitively.
This could happen for example if $B$ contains embeddings of different volumes.
In any case, $Q$ is the source fiber $\alpha^{-1}( b_{\rm ref})$. \qedremark
\end{remark}

We may now describe the kinetic energy of the fluid by
\[
	\ell_{\mathrm{fluid}}(b,\dot{b},u) = \frac{1}{2} \int_{\bulk_b} \| u(x) \|^2  \, \mu(x),
\]
The total Lagrangian for the fluid structure system is a map $L : TQ \to \mathbb{R}$ given by
\begin{equation} \label{fsi_lagrangian}
	L(b,\dot{b},\varphi, \dot{\varphi}) = L_{\mathcal{B}}(b,\dot{b}) + \ell_{\mathrm{fluid}}(b,\dot{b},u) \quad , \quad u = \dot{\varphi} \circ \varphi^{-1}.
\end{equation}
In other words $L = (L_\mathcal{B} \circ T\pi^Q_B) + (\ell_{\mathrm{fluid}} \circ \pi^{TQ}_P)$.
By construction $L$ is $G$-invariant under the action of $G$ on $TQ$.
The equations of motion on $TQ$ are then given by the Euler-Lagrange equations, or equivalently Hamilton's variational principle.
The conserved momenta of Noether's theorem is the circulation of the fluid, just as it is in the case of fluids without structures immersed in them.
In fact, a proof of this proceeds along the same lines of that of the classical Kelvin circulation found in \cite{ArKh1992}.

\section{Variational structures} \label{sec:variations}
We have found that our system has a symmetry by the Lie group
$G$ of volume-preserving diffeomorphisms, and it is reasonable to assert the existence of a flow on
the quotient space $P$ which is $\pi^{TQ}_{P}$-related to the flow of the Euler-Lagrange equations on $TQ$.
As the admissible variations on $TQ$ yield equations of motion on $TQ$, we will find that admissible variations on $P$ will yield equations of motion on $P$.
This section is devoted to writing down the admissibility condition for variations in $P$.

\subsection{Horizontal and vertical splittings}
It is notable that $P$ inherits a vector bundle structure from that of $TQ$.
In particular, $P$ is a vector bundle over $B$ with the projection $\tau : (b,\dot{b},u) \in P \mapsto b \in B$.
This map $\tau$ is nothing more than the push-forward of the tangent bundle projection $\tau_Q:TQ \to Q$ via the commutativity relation $\tau \circ \pi^{TQ}_P = \pi^{Q}_{B} \circ \tau_Q$.
Additionally, there is a natural map $\rho: (b,\dot{b},u) \in P \mapsto (b,\dot{b}) \in TB$ defined by the commutative relation $\rho \circ \pi^{TQ}_{P} = T\pi^{Q}_{B}$.\footnote{Readers who recognized $P$ as an Atiyah algebroid over $B$ should also recognize $\tau$ and $\rho$ as the projection and anchor maps.}

The vector bundle, $P$, can be seen, roughly speaking, as the product of two separate spaces, describing the body and fluid velocities.
In order to express this splitting we may choose to decompose $P$ into complementary vector bundles $P = H(P) \oplus \tilde{\mathfrak{g}}$ where the vertical space is given canonically by $\tilde{\mathfrak{g}} := \mathrm{ker}(\rho)$.
Note that $\tilde{\mathfrak{g}}$ is equivalent to the associated bundle 
\[
	\tilde{\mathfrak{g}} := (Q \times \mathfrak{g}) / G = \{ (b,\xi_b) \mid \xi_b \in \mathfrak{X}_{\rm div}( \bulk_b ) , \left. \xi_b \right|_{\partial \bulk_b} \in \mathfrak{X}( \partial \bulk_b) \}.
\]
In other words, $\tilde{\mathfrak{g}}$ is the vector bundle of Eulerian fluid velocity fields which leave the body fixed.
This vertical space is a Lie-algebra bundle when equipped with the fiber-wise Lie bracket $[(b,\xi_b) , (b,\eta_b) ] := (b, -[\xi_b,\eta_b]_{JL})$.
For each $b \in B$ we may define the adjoint map $\ad_{\xi_b} : \tilde{\mathfrak{g}} \to \tilde{\mathfrak{g}}$ by $\ad_{\xi_b}(b,\eta_b) = (b,[\xi_b,\eta_b])$.
We call the dual map $\ad^*_{\xi_b}: \tilde{\mathfrak{g}}^* \to \tilde{\mathfrak{g}}^\ast$ the \emph{coadjoint map}.

In contrast to $\tilde{\mathfrak{g}}$, there does not exists a natural choice for $H(P)$.
The only requirement on $H(P)$ is that it is complementary to $\tilde{\mathfrak{g}}$.
Therefore, one way to define an $H(P)$ is by choosing a section
$\sigma: TB \to P$ which satisfies the property $\rho
\circ \sigma = id$.  Upon choosing $\sigma$ we set $H(P) = \mathrm{range}(\sigma)$.
Conversely, if we are given a horizontal space $H(P) \subset P$, we may define the section $\sigma = (\restr{\rho}{H(P)})^{-1}$.
  
  For $\sigma:TB \to P$ to be a section of $\rho$ means that for each $(b,\dot{b}) \in TB$ there is a velocity field $u^H$ given by $(b,\dot{b},u^H) = \sigma(b,\dot{b})$.
  In particular, $u^H \in \mathfrak{X}_{\vol}( \bulk_{b})$ is a velocity field which satisfies the boundary condition \eqref{eq:P}.
  Thus we may alternatively view $\sigma$ as a map which assigns $(b,\dot{b}) \mapsto u^H \in \mathfrak{X}_{\vol}(\bulk_b)$. 
This identification of $\sigma$ as a map to Eulerian velocity fields will reduce clutter in future derivations and so this is how we will interpret $\sigma$.
Note that the vector fields produced by $\sigma$ are not tangential to the boundary of the fluid domain.
We may consider the vector bundle over $B$ consisting of pairs $(b,v)$ where $v \in \mathfrak{X}_{\rm div}( \bulk_b) $ is generally \emph{not} tangential to the boundary.
Then $\sigma$ is a vector-bundle valued one form.

\begin{figure}[h] 
   \centering
   \includegraphics[width=3in]{./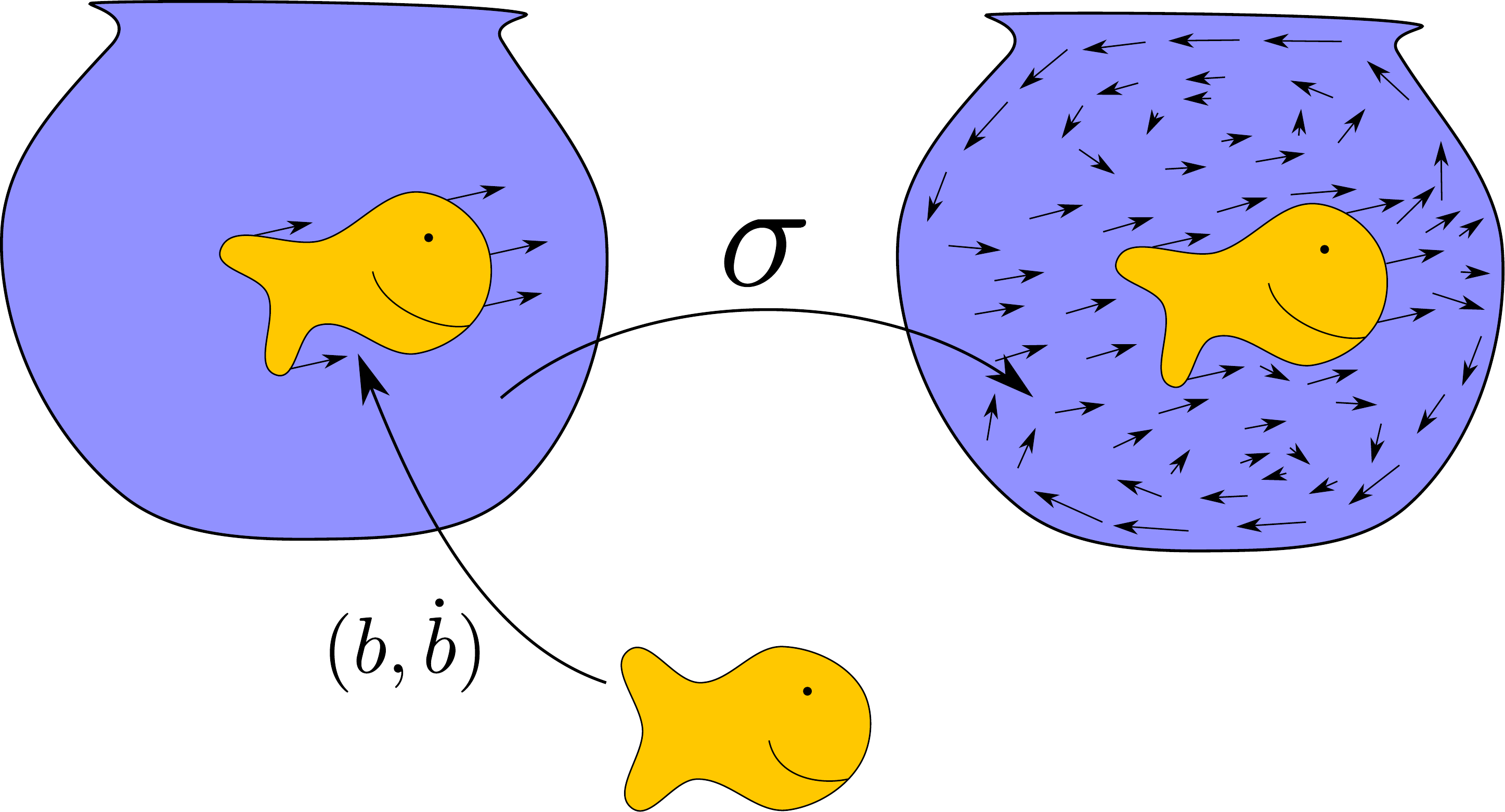} 
   \caption{The map $\sigma$ assigns to each body configuration $b$ and body velocity $\dot{b}$ a velocity field $u^H = \sigma(b, \dot{b})$ for the fluid, which is such that the no-penetration boundary condition holds.}
   \label{fig:section}
\end{figure}

\begin{example}
  One particular choice for $\sigma$ is singled out on physical grounds.  For an inviscid fluid, we begin with the observation that the vorticity is conserved throughout the evolution of the fluid-structure system. As a consequence, the Eulerian velocity field $u$ for the fluid resulting from the body motion given by $\dot{b}$ is a gradient flow, $u = \nabla \Phi$, where the potential $\Phi$ satisfies the following Neumann problem:
\begin{equation} \label{neumann_eq}
	\Delta \Phi = 0, \quad \text{and} \quad 
		\frac{\partial \Phi}{\partial n}(x) = \left<(\dot{b} \circ b^{-1})(x),  n(x)\right> 
			\quad \text{for all $x \in b(\partial \body)$};
\end{equation}
see \cite{La1945, Batchelor}.
The solution of this equation is uniquely defined, up to a constant, and we now define a section of the anchor map denoted $\sigma_{\o}$ by putting 
\begin{equation} \label{neumann_section}
	\sigma_{\o}(\dot{b}) = \nabla \Phi.
\end{equation}

This section $\sigma_{\o}$ can be viewed as the Eulerian version of the ``Neumann connection'' first introduced in  \cite{LeMaMoRa1986} and further analyzed in \cite{Vankerschaver2009}.  We will return to this observation in Appendix~\ref{app:connections}, where we will show that there is in fact a one-to-one correspondence between sections of $\rho$ and principal $G$-connections on $TQ$. 
\qedremark
\end{example}

\subsection{The covariant derivative on $\tilde{\mathfrak{g}}$}
  Given a curve $\lambda \mapsto b(\lambda) \in B$ and a section $\sigma:(b,\dot{b}) \in TB \mapsto \sigma(b,\dot{b}) \in \mathfrak{X}_{\rm div}( \bulk_b)$, we obtain a $\lambda$-dependent sequence of vector fields on an evolving domain,
\[  
  u_\lambda = \sigma \left( b(\lambda), \frac{d b}{d \lambda} \right).
\]  
We let $\varphi_{\lambda} \in \SDiff( \bulk_{b(0)};  \bulk_{b(\lambda)})$ be the flow of $u_\lambda$, i.e. $\frac{d\varphi_\lambda}{d \lambda} = u_\lambda \circ \varphi_\lambda$.
We may define the parallel
transport of $(b(0),\xi_{b(0)}) \in \tilde{\mathfrak{g}}$ along $b(\lambda)$ to be the
$\lambda$-dependent curve $(b(\lambda) ,  (\varphi_\lambda)_*\xi_{b(0)})
\in \tilde{\mathfrak{g}}$.  This means of parallel transport allows us to define a
covariant derivative. By taking the derivative of the parallel transport we obtain the element
\begin{equation}
  \restr{\frac{d}{d\epsilon}}{\epsilon = 0}(b_\epsilon , (\varphi_{\epsilon})_* \xi_b) = ((b, \xi_b), (\delta b, [\sigma(b,\delta b) , \xi_b])) \in T(\tilde{\mathfrak{g}}) \label{eq:der_cov_der}
\end{equation}
where $\delta b = \restr{ \frac{db}{d\lambda} }{\lambda = 0}$.\footnote{Recall that the bracket on the right-hand side of \eqref{eq:der_cov_der} is the Lie algebra bracket on $\mathfrak{X}_{\mathrm{div}}(\bulk_{\mathrm{ref}})$, which is the negative of the usual Jacobi-Lie bracket of vector fields.}
The elements of the form \eqref{eq:der_cov_der} define a horizontal space on $T(\tilde{\mathfrak{g}})$ and induce the horizontal and vertical projections
\begin{align}
	\mathrm{hor}((b, \xi_b), (\dot{b}, \dot{\xi}_b)) & = ((b, \xi_b), (\dot{b}, [\sigma(b,\dot{b}), \xi_b ] )) \nonumber \\
	\mathrm{ver}((b, \xi_b), (\dot{b}, \dot{\xi}_b)) & = ((b, \xi_b), (0,\dot{\xi}_b - [\sigma(b,\dot{b}), \xi_b])). \label{induced_ver}
\end{align}

\begin{prop} \label{prop:cov_deriv}
  The vertical projector of \eqref{induced_ver} induces the covariant derivative
\begin{equation} \label{eq_cov_deriv}
\frac{D}{Dt}( b, \xi_b) =  (b, \dot{\xi}_b - [ \sigma(b,\dot{b}), \xi_b] ).
\end{equation}
\end{prop}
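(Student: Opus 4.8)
The plan is to apply the standard definition of the covariant derivative induced by a connection on a vector bundle, specialized to the Lie-algebra bundle $\tilde{\mathfrak{g}} \to B$ equipped with the vertical projector \eqref{induced_ver}. Recall that for any vector bundle the vertical subspace of $T(\tilde{\mathfrak{g}})$ at a point $(b,\xi_b)$ is canonically isomorphic to the fibre $\tilde{\mathfrak{g}}_b$ itself, the isomorphism sending a vertical vector $((b,\xi_b),(0,w))$ to $(b,w)$. Given a curve $t \mapsto (b(t),\xi_{b(t)})$ in $\tilde{\mathfrak{g}}$ lying over $t \mapsto b(t) \in B$, the covariant derivative is then \emph{defined} as the fibre element corresponding to the vertical part of the curve's velocity; that is, $\frac{D}{Dt}(b,\xi_b)$ is the image under this canonical identification of $\ver\big((b,\xi_b),(\dot b,\dot\xi_b)\big)$.

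First I would compute the velocity of the section: differentiating $t \mapsto (b(t),\xi_{b(t)})$ gives the element $((b,\xi_b),(\dot b,\dot\xi_b)) \in T(\tilde{\mathfrak{g}})$. Substituting this into the vertical projector \eqref{induced_ver} yields
\[
  \ver\big((b,\xi_b),(\dot b,\dot\xi_b)\big) = \big((b,\xi_b),(0,\dot\xi_b - [\sigma(b,\dot b),\xi_b])\big).
\]
Applying the canonical vertical-to-fibre isomorphism then produces $(b,\dot\xi_b - [\sigma(b,\dot b),\xi_b])$, which is precisely \eqref{eq_cov_deriv}. The formula is thus essentially immediate once the connection-theoretic definition is in place; the substantive point is that \eqref{induced_ver} really is the vertical projector of a linear connection, so that this construction is legitimate and curve-independent.

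To certify this, and to tie the formula back to the parallel transport from which the horizontal space \eqref{eq:der_cov_der} was built, I would verify that parallel-transported sections are exactly those annihilated by $\frac{D}{Dt}$. Let $\xi_{b(t)} = (\varphi_t)_* \xi_{b(0)}$ with $\varphi_t$ the flow of $u_t = \sigma(b(t),\dot b(t))$. Using the time-dependent flow identity $\frac{d}{dt}(\varphi_t)_* \eta = [u_t,(\varphi_t)_*\eta]$, where the bracket is the one fixed in the paper, namely the \emph{negative} of the Jacobi-Lie bracket, one finds $\dot\xi_{b(t)} = [u_t,\xi_{b(t)}] = [\sigma(b,\dot b),\xi_b]$, so that $\frac{D}{Dt}(b,\xi_b) = 0$ along parallel transport. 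This matches \eqref{eq:der_cov_der} and confirms that the horizontal space, the projectors \eqref{induced_ver}, and the covariant derivative \eqref{eq_cov_deriv} are mutually consistent.

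The only genuine obstacle is bookkeeping rather than conceptual: one must apply the derivative-of-pushforward identity with the sign convention in force here, so that $\frac{d}{dt}(\varphi_t)_*\eta = +[u_t,\eta]$ rather than with the opposite sign, and one should keep in mind that $\tilde{\mathfrak{g}} \to B$ is an infinite-dimensional (Fréchet) vector bundle, so the manipulations are understood formally, with the canonical vertical-fibre identification and the vector bundle structure on $T(\tilde{\mathfrak{g}})$ used exactly as in the finite-dimensional case.
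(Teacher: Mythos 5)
Your proposal is correct and takes essentially the same route as the paper: the paper's proof simply invokes Definition \ref{def:connection} and Definition \ref{def:cov_der}, i.e.\ the covariant derivative is the vertical drop of the vertical projection of the curve's velocity, which is exactly the computation you spell out, with \eqref{induced_ver} supplying the projector. Your extra verification that parallel-transported sections (with the sign convention $[u,v]=-[u,v]_{JL}$, so that $\frac{d}{dt}(\varphi_t)_*\eta = [u_t,(\varphi_t)_*\eta]$) are annihilated by $\frac{D}{Dt}$ is a correct consistency check consistent with \eqref{eq:der_cov_der}, but it is not needed beyond the paper's one-line argument.
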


\begin{proof}
This follows from Definition \ref{def:connection} (page \pageref{def:cov_der}) and Definition \ref{def:cov_der} (page \pageref{def:cov_der})  for the covariant derivative associated to a connection.
\end{proof}

In Appendix~\ref{app:connections}, we show \eqref{eq_cov_deriv} is also the formula for the covariant derivative induced by a (right) principal connection (see \cite{CeMaRa2001, JaRaDe2012}).
Additionally, the covariant derivative on $\tilde{\mathfrak{g}}$ induces a covariant derivative on the dual bundle, 
$\tilde{\mathfrak{g}}^{\ast}$, by equation \eqref{eq:cov_der2}.  The dual bundle $\tilde{\mathfrak{g}}^*$ can be realized by pairs $(b,\alpha_b)$ where $\alpha_b$ is a covector field on
$\bulk_b$ in the dual space to $\mathfrak{X}_{\vol}(\bulk_b)$, and the induced covariant derivative
is given by
\begin{align}
\frac{D (b,\alpha_b) }{Dt} = (b , \dot{\alpha}_b + \ad_{\sigma(b,\dot{b})}
^* \alpha_b ) \label{eq:dual_cov_der}.
\end{align}
However, the (smooth) dual to $\tilde{\mathfrak{g}}$ is \emph{not} the space of covector fields on $\bulk_b$.
Strictly speaking, the dual space is the set of equivalences classes consisting of one-forms modulo exact forms on $\bulk_b$ \cite{ArKh1992}, but we will not use this identification.

\subsection{The isomorphism between $P$ and $TB \oplus \tilde{\mathfrak{g}}$.}
We can construct an isomorphism between the quotient bundle $P$ and $TB\oplus \tilde{\mathfrak{g}}$.
From a fluid-dynamical point of view, this isomorphism decompose the fluid velocity field into a part consistent with a moving body and a part consistent with a stationary body.

\begin{prop}\label{prop:bundle_isomorphism}
  Let $\sigma : (b,\dot{b}) \in TB \mapsto u^H \in \mathfrak{X}_{\rm div}( \bulk_b)$ be a section of $\rho$.  Then the map $\Psi_{\sigma} : P \to TB \oplus \tilde{\mathfrak{g}}$ given
  by $\Psi_{\sigma}(b,\dot{b},u) = (b , \dot{b},u - \sigma(b,\dot{b}) )$ is a vector-bundle isomorphism.
\end{prop}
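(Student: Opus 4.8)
The plan is to check, in order, that $\Psi_\sigma$ is well-defined (i.e.\ lands in $TB \oplus \tilde{\mathfrak{g}}$), that it is a fiberwise-linear bundle map over the identity on $B$, and that it admits an inverse of the same kind. The first step is where the content sits, since the boundary conditions must be reconciled. Given $(b,\dot b, u) \in P$, write $u^H = \sigma(b,\dot b)$. Both $u$ and $u^H$ lie in $\mathfrak{X}_{\mathrm{div}}(\bulk_b)$, so their difference is divergence-free; it remains only to show $\restr{(u - u^H)}{\partial \bulk_b}$ is tangent to the boundary. Here I would use that $\sigma$ is a section of $\rho$, so that $(b,\dot b, u^H)$ is itself a point of $P$ and hence $u^H$ satisfies the no-penetration condition of \eqref{eq:P} with the very same body velocity $\dot b \circ b^{-1}$ as $u$. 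Subtracting the two instances of that condition gives
\[
  \restr{\left( u - u^H \right)}{\partial \bulk_b}
    = \restr{\left( u - \dot b \circ b^{-1}\right)}{\partial \bulk_b}
      - \restr{\left( u^H - \dot b \circ b^{-1}\right)}{\partial \bulk_b},
\]
and each term on the right is tangent to $\partial \bulk_b$ by membership in $P$, so the left-hand side is as well. Thus $u - \sigma(b,\dot b) \in \tilde{\mathfrak{g}}$ and $\Psi_\sigma$ is well-defined.

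Next I would observe that $\Psi_\sigma$ evidently covers the identity on $B$ (it does not alter the base point $b$) and is linear on each fiber. Linearity uses that $\sigma$ is a vector-bundle-valued one-form, so $\sigma(b,\cdot)$ depends linearly on $\dot b$; consequently the fiber map $(\dot b, u) \mapsto (\dot b,\, u - \sigma(b,\dot b))$ is linear in $(\dot b, u)$, using the vector-bundle structure of $P$ already established. For invertibility I would exhibit the candidate inverse $\Phi_\sigma(b,\dot b,\xi_b) := (b,\dot b,\, \xi_b + \sigma(b,\dot b))$ and verify it takes values in $P$: the field $\xi_b + \sigma(b,\dot b)$ is divergence-free, and its restriction to $\partial \bulk_b$ differs from $\dot b \circ b^{-1}$ by $\restr{\xi_b}{\partial \bulk_b} + \restr{(u^H - \dot b \circ b^{-1})}{\partial \bulk_b}$, which is tangent to the boundary since $\xi_b \in \tilde{\mathfrak{g}}$ is tangent and $(b,\dot b,u^H)\in P$. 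A direct cancellation then gives $\Psi_\sigma \circ \Phi_\sigma = \mathrm{id}$ and $\Phi_\sigma \circ \Psi_\sigma = \mathrm{id}$, and both maps are smooth because $\sigma$ is, so $\Psi_\sigma$ is a vector-bundle isomorphism.

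I expect the only genuine obstacle to be the boundary-condition bookkeeping in the well-definedness step: once one sees that the non-tangential parts of $u$ and of $\sigma(b,\dot b)$ are both pinned to $\dot b \circ b^{-1}$ at $\partial \bulk_b$ and therefore cancel in the difference, the rest is formal linear algebra over $B$ together with the smoothness of $\sigma$.
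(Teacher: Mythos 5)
Your proof is correct and takes essentially the same route as the paper: both arguments rest on the observation that $u$ and $\sigma(b,\dot{b})$ satisfy the same no-penetration condition of \eqref{eq:P}, so that their difference is tangent to $\partial \bulk_b$, and both exhibit the explicit inverse $(b,\dot{b},\xi_b) \mapsto (b,\dot{b},\xi_b + \sigma(b,\dot{b}))$. Your version simply spells out the boundary-condition cancellation and the routine fiberwise-linearity, smoothness, and inverse-lands-in-$P$ checks that the paper leaves implicit.
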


\begin{proof}
 To check that $\Psi_{\sigma}$ maps to the correct space it suffices to check that $u - \sigma(b,\dot{b})$ is a vector field which is tangent to the boundary of $\bulk_b$.  However this is true by construction because both $\sigma(b,\dot{b})$ and $u$ must satisfy the no-penetration condition given in  \eqref{eq:P}, so that the difference $\sigma(b,\dot{b}) - u$ is tangent to the boundary.
  Additionally, $\Psi_{\sigma}$ is
  invertible with the inverse $\Psi_{\sigma}^{-1}(b,\dot{b},\xi_b) = (b,\dot{b}, \xi_b + \sigma(b,\dot{b}))$.
\end{proof}

\begin{example}
Let $\sigma$ be defined by the Neumann map \eqref{neumann_section}.
In this case, $\sigma_{\o}(b,\dot{b}) = \nabla \Phi$.  We then have that 
\[
	\dot{\varphi} \circ \varphi^{-1} = \underbrace{(\dot{\varphi} \circ \varphi^{-1} - \nabla \Phi)}_{\text{consistent with stationary body}} + \underbrace{(\nabla \Phi)}_{\text{consistent with moving body}},
\]
where $\Phi$ is given by \eqref{neumann_eq}.  The first factor, $\dot{\varphi} \circ \varphi^{-1} - \nabla \Phi$, is a divergence-free vector field tangent to the boundary, while the second factor is a gradient vector field, so that this decomposition is nothing but the Helmholtz-Hodge decomposition of vector fields. If we let $\mathbb{P}$ denote the projection onto divergence-free vector fields in the Helmholtz-Hodge decomposition, the isomorphism in Proposition~\ref{prop:bundle_isomorphism} becomes
\[
	\Psi_{\sigma_{\o}}( [ ( b , \dot{b} , \varphi, \dot{\varphi} ) ] ) = (b , \dot{b}
, \mathbb{P}(\dot{\varphi} \circ \varphi^{-1})).
\]
In general, when $\sigma$ does not produce gradient vector fields, the isomorphism $\Psi_\sigma$ can be viewed as giving rise to a \emph{generalized Helmholtz-Hodge decomposition}. \qedremark
\end{example}

\subsection{Covariant variations}
  Let $(b,\varphi)_t$ be a curve in $Q$.  Then a \emph{(finite) deformation} is
  a $\lambda$-parametrized family of curves $(b,\varphi)_{\lambda , t}$ such that
  $(b,\varphi)_{0,t} = (b,\varphi)_t$.
 We desire to measure how much the infinitesimal variation $\delta ( b ,
\varphi)_t = \restr{ \pder{}{\lambda} }{ \lambda = 0} ( b,\varphi)_{\lambda,t}$ changes the quantity $(b,\xi_b)_t :=
(b_t , \dot{\varphi}_t \circ \varphi_t^{-1} - \sigma(b,\dot{b})) \in \tilde{\mathfrak{g}}$.  To do this we invoke the covariant
derivative of Proposition \ref{prop:cov_deriv}.

Additionally, we may take the Levi-Civita connection of an arbitrary Riemannian metric on $B$.
The Whitney sum of these covariant derivatives is a covariant derivative on $TB \oplus \tilde{\mathfrak{g}}$ which we also denote $\frac{D}{Dt}$.

Using this covariant derivative, we may define the \emph{covariant variation}
of a curve $(b,\varphi)_t \in \tilde{\mathfrak{g}}$ with respect to a deformation
$(b,\xi_b)_{\lambda,t}$ by 
\begin{equation} \label{covariant_derivative}
\delta^{\sigma}(b,\xi_b) := \restr{\frac{D(b, \xi_b) }{ D \lambda }}{\lambda=0}.
\end{equation} 

We now compute the variation of a reduced curve $(b, \xi_b)_t$ induced by a variation of the curve $(b, \varphi)_t$ in $Q$. We split this computation into two parts: in proposition~\ref{prop:vert_var} we compute the effect of vertical variations, which leave $b$ fixed and act on $\varphi$ by particle relabelings. In proposition~\ref{prop:hor_var} we then consider the effect of horizontal variations, which change $b$ (and change $\varphi$ accordingly).

\begin{prop}\label{prop:vert_var}
  Let $\sigma: (b,\dot{b}) \in TB \mapsto \sigma(b,\dot{b}) \in \mathfrak{X}_{\rm div}(\bulk_b)$ be a section of $\rho$ and let $(b,\varphi)_t$
  be a curve in $Q$.  Define the time-dependent vector field
\[
  (\xi_b)_t = \dot{\varphi}_t \circ \varphi_t^{-1} - \sigma((b,\dot{b})(t)),
\]
so that $(b,\xi_b)_t \in \tilde{\mathfrak{g}}$ for all $t$.  Given a vertical
deformation of the curve given by $(b_t, \varphi_t \circ \psi_{t,\lambda})$ for a time-dependent
 deformation of the identity, $\psi_{t,\lambda} \in G$, the covariant variation of $(b,\xi_b)_t$ is given by
\[
\delta^{\sigma} (b,\xi_b) = \frac{D (b,\eta_b) }{Dt} - [(b,\xi_b),(b
, \eta_b) ]
\]
where $\eta_b = (\varphi_t)_*\eta$ and $\eta := \restr{\pder{
    \psi_{t,\lambda} }{\lambda } }{\lambda=0} \in \mathfrak{g}$.
\end{prop}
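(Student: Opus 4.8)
The plan is to push the vertical deformation through the definition of $(b,\xi_b)_t$, reduce the covariant $\lambda$-derivative to an ordinary fibre derivative (the base point is frozen along a vertical deformation), and then convert the result into the claimed form using two elementary identities for right-trivialized velocities on $G$.

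First I would insert the deformation $\varphi_{t,\lambda} = \varphi_t \circ \psi_{t,\lambda}$ into the Eulerian velocity. Applying the chain rule to $\dot\varphi_{t,\lambda} \circ \varphi_{t,\lambda}^{-1}$ and using $\psi_{t,\lambda}(\varphi_{t,\lambda}^{-1}(y)) = \varphi_t^{-1}(y)$, I expect the clean splitting $\dot\varphi_{t,\lambda} \circ \varphi_{t,\lambda}^{-1} = \dot\varphi_t \circ \varphi_t^{-1} + (\varphi_t)_* w_{t,\lambda}$, where $w_{t,\lambda} := \dot\psi_{t,\lambda} \circ \psi_{t,\lambda}^{-1}$ is the right-trivialized $t$-velocity of $\psi$. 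Since the body curve $b_t$ is held fixed along a vertical deformation, $\sigma(b,\dot b)$ is untouched, and therefore $(\xi_b)_{t,\lambda} = (\xi_b)_t + (\varphi_t)_* w_{t,\lambda}$ with $w_{t,0} = 0$.

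Next, because $\partial_\lambda b = 0$ and $\sigma$ is fibrewise linear, the term $[\sigma(b,\partial_\lambda b),\,\cdot\,]$ in the covariant-derivative formula of Proposition~\ref{prop:cov_deriv} drops out, so the covariant variation reduces to the plain fibre derivative $\delta^\sigma(b,\xi_b) = (b,\,\partial_\lambda (\xi_b)_{t,\lambda}|_{\lambda=0}) = (b,\,(\varphi_t)_*\,\partial_\lambda w_{t,\lambda}|_{\lambda=0})$. To evaluate $\partial_\lambda w|_{\lambda=0}$ I would equate the mixed partials of $\psi_{t,\lambda}$; writing $\eta_{t,\lambda} := \partial_\lambda \psi_{t,\lambda} \circ \psi_{t,\lambda}^{-1}$, this yields the zero-curvature relation $\partial_\lambda w - \partial_t \eta_{t,\lambda} = [w,\eta_{t,\lambda}]_{JL}$. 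Setting $\lambda = 0$ kills $w$ and hence the bracket, and since $\eta_{t,0} = \eta$ this leaves $\partial_\lambda w|_{\lambda=0} = \partial_t \eta$, so that $\delta^\sigma(b,\xi_b) = (b,\,(\varphi_t)_*\,\partial_t\eta)$.

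Finally I would massage $(\varphi_t)_*\partial_t\eta$ into the stated shape with the transport identity $(\varphi_t)_*(\partial_t\eta) = \partial_t\eta_b + [u,\eta_b]_{JL}$, where $u = \dot\varphi_t\circ\varphi_t^{-1}$ and $\eta_b = (\varphi_t)_*\eta$; this follows by differentiating $\eta = \varphi_t^*\eta_b$ in $t$ and invoking the dynamic Lie-derivative formula $\frac{d}{dt}\varphi_t^* Z = \varphi_t^*(\partial_t Z + [u,Z]_{JL})$. Converting to the Lie-algebra bracket via $[\cdot,\cdot]_{JL} = -[\cdot,\cdot]$ and substituting $u = \sigma(b,\dot b) + \xi_b$ splits $[u,\eta_b]_{JL} = -[\sigma(b,\dot b),\eta_b] - [\xi_b,\eta_b]$; recognizing $(b,\partial_t\eta_b - [\sigma(b,\dot b),\eta_b])$ as $\frac{D(b,\eta_b)}{Dt}$ and $(b,[\xi_b,\eta_b])$ as the fibre bracket $[(b,\xi_b),(b,\eta_b)]$ gives precisely the claimed formula. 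The computation itself is routine; the \emph{main obstacle} is bookkeeping — keeping the two sign conventions consistent (the Lie bracket on $\mathfrak{X}_{\rm div}(\bulk_{\rm ref})$ is the negative of the Jacobi-Lie bracket) and correctly deploying the two structural identities, namely the mixed-partials relation between the $t$- and $\lambda$-velocities of $\psi_{t,\lambda}$ and the transport identity for $(\varphi_t)_*\partial_t\eta$. The only point requiring genuine care is that these finite-dimensional manipulations remain valid on the Fr\'echet Lie group $G = \SDiff(\bulk_{\rm ref};\bulk_{\rm ref})$, and that each intermediate vector field stays divergence-free and appropriately tangent to the moving boundary so that it honestly represents an element of $\tilde{\mathfrak{g}}$.
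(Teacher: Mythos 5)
Your proof is correct and follows essentially the same route as the paper's: both arguments reduce the covariant variation to the intermediate identity $\delta\xi_b = (\varphi_t)_*\dot{\eta}$ via equality of mixed partials, and then convert this to the stated form using the transport identity $\tfrac{d}{dt}\big((\varphi_t)_*\eta\big) = [\,\dot{\varphi}\circ\varphi^{-1},\,\eta_b\,] + (\varphi_t)_*\dot{\eta}$ together with the sign convention $[\cdot,\cdot] = -[\cdot,\cdot]_{JL}$ and the substitution $u = \xi_b + \sigma(b,\dot{b})$. The only difference is organizational: you apply the mixed-partials (zero-curvature) relation to $\psi_{t,\lambda}$ and exploit $w_{t,0}=0$ to kill the bracket term, whereas the paper differentiates $\dot{\varphi}_{t,\lambda}\circ\varphi_{t,\lambda}^{-1}$ directly and the corresponding terms $T\dot{\varphi}\circ\eta\circ\varphi^{-1}$ cancel explicitly between $T_1$ and $T_2$.
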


\begin{proof}
  To reduce clutter we shall suppress the $t$ and $\lambda$ dependence
  of $\varphi_{\lambda,t}$ and abbreviate it as `$\varphi$'.
  The partial derivative $\delta \xi_b :=\restr{
    \pder{} {\lambda}}{\lambda=0}( \xi_b)$ may be split into three parts.
  \begin{align*}
    \delta \xi_b = \underbrace{\pder{ \dot{\varphi} }{\lambda} \circ \varphi^{-1}}_{T_1} +
    \underbrace{ T\dot{\varphi} \circ \pder{ \varphi^{-1}}{\lambda}
    }_{T_2} - \underbrace{ \pder{
      \sigma(b,\dot{b}) }{\lambda} }_{T_3}.
  \end{align*}
  As the variation is vertical, $\sigma(b,\dot{b})$ does not depend on $\lambda$ and therefore $T_3 = 0$.  We can rewrite $T_1$ as 
  \begin{align*}
    T_1=\pder{\dot{\varphi}}{\lambda} \circ \varphi^{-1} &= \frac{ \partial^2 \varphi }{ \partial
      \lambda \partial t} \circ \varphi^{-1} = \pder{}{t}( \delta \varphi) \circ \varphi^{-1} = \pder{}{t}(T\varphi \circ \eta) \circ \varphi^{-1} \\
      &= T \dot{\varphi} \circ \eta \circ \varphi^{-1} + T \varphi \circ \dot{\eta} \circ \varphi^{-1} = T \dot{\varphi} \circ \eta \circ \varphi^{-1} + \varphi_* \dot{\eta},
  \end{align*}
where we have used the fact that 
\[
	\delta \varphi = \frac{\partial}{\partial \lambda}( \varphi \circ \psi_\lambda) \Big|_{\lambda = 0}
		= T \varphi \circ \eta.
\]  
  
On the other hand, $T_2$ may be written as
\begin{align*}
 T_2= T \dot{\varphi} \circ \pder{ \varphi^{-1} }{ \lambda } &= -T(\dot{\varphi}
  \circ \varphi^{-1} )\circ \delta \varphi \circ \varphi^{-1} = - T(\dot{\varphi} \circ \varphi^{-1}) \circ T\varphi \circ \eta \circ
  \varphi \\
  &= - T\dot{\varphi} \circ \eta \circ \varphi^{-1}
\end{align*}
Thus we find $\delta \xi_b = \varphi_* \dot{\eta}$.  Since $\delta b =
0$, and using the expression \eqref{covariant_derivative} for the covariant derivative, we see that
$\delta^{\sigma} (b, \xi_b) = (b,\delta \xi_b - [\sigma( \delta b) , \xi_b ]) = (b, \varphi_*\dot{\eta}  )$.
Additionally note that
\[
\frac{D(b,\eta_b)}{Dt} = \left(b, \frac{d \eta_b }{dt} - [
\sigma(b,\dot{b}) , \eta_b]\right).
\]
We calculate 
\begin{align*}
\frac{d \eta_b }{dt}=\frac{d}{dt} \varphi_* \eta &= [ \dot{\varphi} \circ \varphi^{-1} ,
\varphi_* \eta] + \varphi_* \dot{\eta} = [ \xi_b + \sigma(b,\dot{b}) , \eta_b ] + \varphi_* \dot{\eta}.
\end{align*}
Therefore $\varphi_* \dot{\eta} = \frac{d
  \eta_b}{dt} - [ \xi_b + \sigma(b,\dot{b}), \eta_b
]$ so that 
\begin{align*}
\delta^{\sigma}(b,\xi_b) &= \left(b, \frac{d
  \eta_b}{dt} - [\xi_b + \sigma(b,\dot{b}) ,
\eta_b] \right) = \frac{D (b,\eta_b)}{Dt} - [(b,\xi_b), (b,\eta_b) ]. \qedhere
\end{align*} 
\end{proof}

We understand how variations of curves in $Q$ induced by $G$ are expressed as variations in $\tilde{\mathfrak{g}}$, but how about other variations?  We
must also consider how $(b,\xi_b) = (b, \dot{\varphi} \circ \varphi^{-1} - \sigma(b,\dot{b}))$ varies in response to variations of
$b$.  Given a curve $(b,\varphi)_t \in Q$ we may take a deformation of
the curve $b_t \in B$ given by $b_{t,\lambda}$ and define the
\emph{horizontal deformation} of $(b,\varphi)_t$ as follows. For fixed $t$, we consider the vector field $u_{t, \lambda}$ defined by $ u_{t, \lambda} = \sigma \left( \frac{\partial b_{t, \lambda}}{\partial \lambda} \right)$,
 and we let $\varphi_{t, \lambda}$ be the flow of $u_{t, \lambda}$, so that $\dot{\varphi}_{t, \lambda} = u_{t, \lambda} \circ \varphi_{t, \lambda}$. In this way, we obtain a family of curves $(b_{t, \lambda}, \varphi_{t, \lambda})$, which we will call a horizontal deformation of $(b, \varphi)_t$.

\begin{prop}\label{prop:hor_var}
  Given a horizontal deformation of $(b,\varphi)$ given by $(b_{t,\lambda}, \varphi_{t,\lambda})$
 the covariant variation of $(b,\xi_b) := (b,\dot{\varphi}\circ \varphi^{-1} - \sigma(b,\dot{b}))$ is
\[
\delta^{\sigma} (b,\xi_b) = C(\dot{b}, \delta b),
\]
where $C$ is a $\tilde{\mathfrak{g}}$-valued $2$-form given by 
\[
  C \left( \dot{b},\delta b \right) = d \sigma \left ( \dot{b}, \delta b \right) - \left [
\sigma ( \dot{b} ) , \sigma \left( \delta b \right) \right].
\]
where we are viewing $\sigma$ as a vector-bundle valued form.
\end{prop}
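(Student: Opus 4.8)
The plan is to reduce everything to the covariant-derivative formula of Proposition~\ref{prop:cov_deriv} and then recognize the answer as the curvature-type expression $C$. Writing $\delta b = \restr{\partial_\lambda b_{t,\lambda}}{\lambda=0}$ and applying \eqref{covariant_derivative} together with \eqref{eq_cov_deriv} in the $\lambda$-direction, the covariant variation is
\begin{equation*}
\delta^{\sigma}(b,\xi_b) = \left(b,\; \delta\xi_b - [\sigma(b,\delta b),\xi_b]\right), \qquad \delta\xi_b := \restr{\partial_\lambda \xi_b}{\lambda=0},
\end{equation*}
exactly as in the proof of Proposition~\ref{prop:vert_var}. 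Since $\xi_b = \dot{\varphi}\circ\varphi^{-1} - \sigma(b,\dot{b})$, I would split $\delta\xi_b = \partial_\lambda(\dot{\varphi}\circ\varphi^{-1}) - \partial_\lambda\sigma(b,\dot{b})$ and treat the two terms separately.

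The heart of the computation is the first term. First I would record the defining feature of a horizontal deformation: by construction $\partial_\lambda\varphi_{t,\lambda}\circ\varphi_{t,\lambda}^{-1} = \sigma(b_{t,\lambda},\partial_\lambda b_{t,\lambda})$ holds identically in $(t,\lambda)$, the Eulerian analogue of the relation $\delta\varphi = T\varphi\circ\eta$ used in Proposition~\ref{prop:vert_var}. Writing $u = \dot{\varphi}\circ\varphi^{-1}$ and $w = \partial_\lambda\varphi\circ\varphi^{-1} = \sigma(\delta b)$, I would then exploit equality of the mixed partials $\partial_\lambda\partial_t\varphi = \partial_t\partial_\lambda\varphi$. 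Expanding each side through $\partial_t\varphi = u\circ\varphi$ and $\partial_\lambda\varphi = w\circ\varphi$ and composing with $\varphi^{-1}$ on the right yields the zero-curvature identity
\begin{equation*}
\partial_\lambda u - \partial_t w = [u,w]_{JL} = -[u,w],
\end{equation*}
the sign flip being the paper's convention $[\cdot,\cdot] = -[\cdot,\cdot]_{JL}$. Hence $\partial_\lambda(\dot{\varphi}\circ\varphi^{-1}) = \partial_t\sigma(\delta b) - [u,\sigma(\delta b)]$ at $\lambda=0$; here the fact that $w = \sigma(\delta b)$ holds for \emph{all} $\lambda$ is precisely what licenses writing $\partial_t w = \partial_t\sigma(\delta b)$.

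Now I would substitute $u = \xi_b + \sigma(\dot{b})$, so that $[u,\sigma(\delta b)] = [\xi_b,\sigma(\delta b)] + [\sigma(\dot{b}),\sigma(\delta b)]$, and assemble
\begin{equation*}
\delta^{\sigma}(b,\xi_b) = \left(b,\; \partial_t\sigma(\delta b) - \partial_\lambda\sigma(\dot{b}) - [\xi_b,\sigma(\delta b)] - [\sigma(\dot{b}),\sigma(\delta b)] - [\sigma(\delta b),\xi_b]\right).
\end{equation*}
The two $\xi_b$-brackets cancel by antisymmetry, and the surviving derivative terms are exactly the exterior derivative of the bundle-valued one-form $\sigma$ evaluated on the coordinate fields $\dot{b} = \partial_t b$ and $\delta b = \partial_\lambda b$: because these come from a genuine two-parameter family their Lie bracket vanishes, so $(d\sigma)(\dot{b},\delta b) = \partial_t\sigma(\delta b) - \partial_\lambda\sigma(\dot{b})$. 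What remains is precisely $C(\dot{b},\delta b) = d\sigma(\dot{b},\delta b) - [\sigma(\dot{b}),\sigma(\delta b)]$, as claimed, and the equality simultaneously certifies that $C$ is $\tilde{\mathfrak{g}}$-valued.

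I expect the main obstacle to be the sign bookkeeping in the zero-curvature identity---keeping straight the two bracket conventions and the direction of the chain rule through $\varphi^{-1}$---together with justifying the interpretation of $d\sigma$ for a one-form whose fiber $\mathfrak{X}_{\rm div}(\bulk_b)$ varies with the base point $b$. The two-parameter-family computation circumvents the latter difficulty by evaluating the two partial derivatives directly and only reading them off as $d\sigma$ at the very end.
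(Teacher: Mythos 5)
Your argument is correct, and its engine is the same as the paper's: equality of mixed partials for the two-parameter family $\varphi_{t,\lambda}$, the covariant-variation formula $\delta^{\sigma}(b,\xi_b)=(b,\delta\xi_b-[\sigma(b,\delta b),\xi_b])$, and cancellation of the two $\xi_b$-brackets by antisymmetry. Where you genuinely depart from the paper is in which lemmas carry the derivative bookkeeping. The paper expands $\partial_\lambda(\dot{\varphi}\circ\varphi^{-1})$ and $\partial_\lambda\sigma(\dot{b})$ through the connection-based partial derivatives of Proposition~\ref{prop:df=dmf+def}, obtaining $\lb \pder{\sigma}{b}(\delta b),\dot{b}\rb - \lb \pder{\sigma}{b}(\dot{b}),\delta b\rb$ plus two copies of $\sigma\left(\frac{D}{Dt}\delta b\right)$ that cancel, and then invokes Proposition~\ref{prop:da(v,w)=da(w)v-da(v)w} to recognize $d\sigma$; you instead package the mixed-partials step as the Eulerian zero-curvature identity $\partial_\lambda u-\partial_t w=-[u,w]$ (the same identity that makes $\Theta_R$ flat in Appendix~\ref{app:connections}, and exactly what the paper computes inline in Lagrangian variables), and you read off $d\sigma(\dot{b},\delta b)=\partial_t\sigma(\delta b)-\partial_\lambda\sigma(\dot{b})$ from the commuting-fields formula. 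This buys a proof that never touches the appendix propositions and makes the cancellations transparent. The price is paid at the final identification, which is where their content re-enters: since the fibers $\mathfrak{X}_{\rm div}(\bulk_b)$ vary with $b$, the total derivatives $\partial_t\sigma(\delta b)$ and $\partial_\lambda\sigma(\dot{b})$ differ from the connection-based partials by the fiber terms $\sigma\left(\frac{D}{Dt}\delta b\right)$ and $\sigma\left(\frac{D}{D\lambda}\dot{b}\right)$, so your expression coincides with the $d\sigma$ appearing in the statement of the proposition exactly because $\frac{D}{Dt}\delta b=\frac{D}{D\lambda}\dot{b}$, i.e.\ because the connection on $B$ is torsion-free. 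That symmetry lemma is the silent ingredient of your last step---the paper uses it just as silently when it writes $\sigma\left(\frac{D}{Dt}\delta b\right)$ for the fiber part of $\partial_\lambda\sigma(b,\dot{b})$---so you should state it. (One small slip: what licenses $\partial_t w=\partial_t\sigma(\delta b)$ is that $w_{t,\lambda}=\sigma(\partial_\lambda b_{t,\lambda})$ holds for all $t$, not ``for all $\lambda$''; since the deformation is horizontal identically in $(t,\lambda)$, this is harmless.)
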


\begin{remark}
Up to sign, the $\tilde{\mathfrak{g}}$-valued 2-form $C$ is the \emph{curvature} of a principal connection which is identifiable with $\sigma$. We will return to this point in Section~\ref{sec:curvature}, but for now we just refer to $C$ as the curvature of $\sigma$.  In particular, Proposition \ref{prop:hor_var} is an instance of Lemma 3.1.2 of \cite{CeMaRa2001} when adapted to our specific infinite dimensional setting. \qedremark
\end{remark}

\begin{proof}
  Upon invoking Proposition \ref{prop:df=dmf+def} and the fact that
  the fiber derivative, $\pder{\sigma}{\dot{b}}$, is identical to
  $\sigma$ (because $\sigma$ is linear in the fibers) we find that
\[
\delta \xi_b = \restr{ \pder{ \dot{\varphi}}{\lambda} }{ \lambda=0 }
\circ \varphi^{-1} - T\dot{\varphi} \circ \varphi^{-1} \circ \delta \varphi
\circ \varphi^{-1} - \left \lb \pder{ \sigma }{b}(\dot{b}) ,
  \delta b \right \rb  -
 \sigma \left( \frac{D}{Dt}( \delta b ) \right) .
\]
Using the equivalence of mixed partials we find that
\begin{align*}
\restr{ \pder{ \dot{\varphi} }{\lambda } }{ \lambda = 0} \circ \varphi^{-1} &=
\frac{d}{dt}( \delta \varphi ) \circ \varphi^{-1} = \frac{d}{dt} \left( \sigma(
  \delta b) \circ \varphi \right) \circ \varphi^{-1} \\
&= \left \lb \pder{\sigma}{b}(\delta b), \dot{b} \right \rb +
 \sigma \left( \frac{D}{Dt}( \delta b ) \right) +
T(\sigma(\delta b)) \circ  (\dot{\varphi}
\circ \varphi^{-1}).
\end{align*}
Upon substitution into the last line of the previous calculation we
find
\begin{align*}
\delta \xi_b &= \left \lb \pder{\sigma}{b}(\delta b) , \dot{b}
  \right \rb -
  \left \lb \pder{\sigma}{b}(\dot{b}) ,  \delta b \right \rb
+ T(\sigma(\delta b)) \circ ( \dot{\varphi} \circ \varphi^{-1} )
- T(\dot{\varphi} \circ \varphi^{-1} ) \circ \sigma(\delta b) \\
\intertext{and by Proposition \ref{prop:da(v,w)=da(w)v-da(v)w} }
&= d\sigma(\dot{b},\delta b) + [ \sigma(\delta b) ,
\dot{\varphi} \circ \varphi^{-1} ] = d\sigma(\dot{b} ,\delta b) + [ \sigma(\delta b) , \xi_b + \sigma(\dot{b}) ].
\end{align*}
Therefore the covariant variation is
\[
\delta^{\sigma}(b,\xi_b) = (b,\delta \xi_b -
\left [\sigma(b,\delta b),\xi_b \right]) = (b, d\sigma(\dot{b} , \delta b ) - \left[
\sigma( \dot {b} ) ,
\sigma( \delta b ) \right]) :=C(\dot{b},\delta b). \qedhere
\]
\end{proof}

 In summary, a variation $\delta q(t)$ of a curve $q(t) \in Q$ will lead to a variation of the curve $\dot{q}(t) \in TQ$ which can be passed to the quotient $P$.
 Moreover if we choose a section, $\sigma$, we get  an isomorphism from $P$ to the space $TB \oplus \tilde{\mathfrak{g}}$ and the relevant variations take the form
\[
	\delta^{\sigma}( b , \dot{b}, \xi_b) = \left( b, \frac{D \dot{b} }{D\lambda} , \frac{D \eta_b}{Dt} - [ \xi_b, \eta_b] + C(\dot{b},\delta b) \right).
\]
  Now that we fully understand the journey from variations of curves in $Q$ to variations of curves in $TB \oplus \tilde{\mathfrak{g}}$ we may use Hamilton's variational principle to derive the equations of motion on $TB \oplus \tilde{\mathfrak{g}}$.
  These equations are known as the \emph{Lagrange-Poincar\'{e} equations}.

\section{The Lagrange-Poincar\'e equations} \label{sec:LP}
  In this section we will derive the equations of motion on $P \cong TB \oplus \tilde{\mathfrak{g}}$ using a reduced version of Hamilton's principle known as the Lagrange-Poincar\'{e} variational principle.
  We will first derive the equations of motion in geometric form (Theorem~\ref{thm:main1}) and we will then show that these equations are equivalent to the equations \eqref{eq:EofM1} and \eqref{eq:EofM2} in coordinates when $M = \mathbb{R}^d$ (Theorem~\ref{thm:main2}).

First recall that the total fluid-structure Lagrangian, $L:TQ \to \mathbb{R}$, is  $G$-invariant and the reduced Lagrangian $\ell : P \to \mathbb{R}$ is $\ell(b,\dot{b},u) = L_B(b,\dot{b}) + \ell_{\mathrm{fluid}}( b , \dot{b} , u)$ where
\[
	\ell_{\mathrm{fluid}}(b,\dot{b},u) = \frac{1}{2} \int_{\bulk_b} \| u(x) \|^2 \, \mu(x).
\]
In the previous section we found that $P$ is isomorphic to $TB \oplus \tilde{\mathfrak{g}}$ upon choosing a section of $\rho$ given by a map $\sigma:(b,\dot{b}) \in TB \to \sigma(b,\dot{b}) \in \mathfrak{X}_{\rm div}(\bulk_b)$.  The reduced Lagrangian on $T B \oplus \tilde{\mathfrak{g}}$ (which we will also denote by $\ell$) is given by
\begin{align}
\ell( b, \dot{b} , \xi_b ) = L_{B}( b , \dot{b}) + \frac{1}{2} \int_{
  \bulk_{b}}{ \| \xi_b(x) + \sigma(b,\dot{b})(x) \|^2 \, \mu(x) } \label{eq:reduced_lag}.
\end{align}

With this reduced Lagrangian we may transfer Hamilton's principle from $TQ$ to $TB \oplus \tilde{\mathfrak{g}}$ and derive symmetry-reduced equations of motion as in \cite{CeMaRa2001}.

\begin{thm}[Lagrange-Poincar\'{e} Theorem] \label{thm:main1}
  Let $\sigma:(b,\dot{b}) \in TB \to \sigma(b,\dot{b}) \in \mathfrak{X}_{\rm div}(\bulk_b)$ be a section of $\rho:P \to TB$ and let $q(t) =
  (b,\varphi)_t$ be a curve in $Q$.
  Let $(b,\dot{b},\xi_b)(t) = \Psi_{\sigma}( \pi^{TQ}_P( q,\dot{q}) )$ be the induced curve in $TB \oplus \tilde{\mathfrak{g}}$.
  Then the following are equivalent
\begin{enumerate}
\item The curve $q( \cdot )$ extremizes the action functional
\[
\mathcal{A} = \int_{0}^{t}{ L(q,\dot{q}) dt}
\]
with respect to arbitrary variations of $q( \cdot )$ with fixed end points.
\item The curve $q( \cdot )$ satisfies the Euler-Lagrange
  equations
\[
\frac{D}{Dt} \left( \pder{L}{\dot{q}} \right) - \pder{L}{q} = 0.
\]
with respect to an arbitrary torsion-free connection on $TQ$.
\item The curve $(b,\dot{b},\xi_b )(\cdot)$ extremizes the reduced action
\[
\mathcal{A}_{red} = \int_{0}^{t}{ \ell( b ,\dot{b} , \xi_b) dt }
\]
with respect to variations, $\delta b$, of the curve $b( \cdot )$  with fixed end points and
covariant variations of $(b,\xi_b)(\cdot)$ of the form
\begin{align}
	\delta^{\sigma} (b,\xi_b) = \frac{D (b,\eta_b)
}{Dt} - (b, [\xi_b , \eta_b]) + C(\dot{b},\delta b) \label{eq:vert_variations}
\end{align}
 for an arbitrary curve $(b,\eta_b)( \cdot)$ in $\tilde{\mathfrak{g}}$ over the curve $b(\cdot)$.
\item The curve $(b,\dot{b},\xi_b)(\cdot)$ satisfies the
  Lagrange-Poincar\'{e} equations:
  \begin{align}
    &\frac{D}{Dt} \left( \pder{\ell}{ \dot{b} } \right) - \pder{\ell}{b} =
    i_{\dot{b}} C_{\partial \ell / \partial \xi_b} \label{eq:LP_hor}
    \\
    &\frac{D}{Dt} \left( \pder{\ell}{\xi_b} \right) =
    -\ad_{(b,\xi_b) }^* \left(
  \pder{\ell}{\xi_b} \right). \label{eq:LP_ver}
  \end{align}
where $C_{\partial \ell / \partial \xi_b}$ is a two-form on $B$ given by $C_{\partial \ell / \partial \xi_b}( \dot{b},
\delta b) = \left \lb \pder{\ell}{\xi_b} , C(\dot{b},\delta b) \right \rb$.
\end{enumerate}
\end{thm}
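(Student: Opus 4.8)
The plan is to prove the theorem as a cycle of equivalences, $(1) \Leftrightarrow (2)$, $(1) \Leftrightarrow (3)$, and $(3) \Leftrightarrow (4)$, exploiting the $G$-invariance of $L$ together with the variational calculus assembled in Section~\ref{sec:variations}.

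For $(1) \Leftrightarrow (2)$ I would invoke the classical equivalence between Hamilton's principle and the Euler--Lagrange equations. Taking an arbitrary variation $\delta q$ of $\mathcal{A}$ and integrating by parts with respect to a torsion-free connection on $TQ$, the boundary terms vanish because the endpoints are fixed, leaving $\int_0^t \langle \frac{D}{Dt}(\partial L/\partial \dot{q}) - \partial L/\partial q,\, \delta q\rangle\, dt$. Since $\delta q$ is arbitrary in the interior, extremality is equivalent to the vanishing of the integrand, which is $(2)$. The torsion-free hypothesis is exactly what guarantees the commutation $\frac{D}{D\lambda}\dot{q} = \frac{D}{Dt}\delta q$, making the geometric form of the equations independent of the chosen connection.

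For $(1) \Leftrightarrow (3)$ I would use the identity $L = \ell \circ \Psi_\sigma \circ \pi^{TQ}_P$ from the $G$-invariance of $L$, so that the two action functionals coincide, $\mathcal{A} = \mathcal{A}_{red}$, under the identification $(b,\dot{b},\xi_b)(t) = \Psi_\sigma(\pi^{TQ}_P(q,\dot{q}))$. The substance of the equivalence is then a correspondence between variations. An arbitrary variation $\delta q$ with fixed endpoints decomposes, relative to $\sigma$, into a horizontal and a vertical part; applying Propositions~\ref{prop:vert_var} and \ref{prop:hor_var} shows that the induced covariant variation of $(b,\xi_b)$ is precisely of the form \eqref{eq:vert_variations}, with $\delta b$ the projected horizontal part and $(b,\eta_b)$ the vertical generator. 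One must verify that, as $\delta q$ ranges over all variations vanishing at the endpoints, the pair $(\delta b, \eta_b)$ ranges over all curves vanishing at the endpoints; this reconstruction argument is the step demanding the most care and is where the genuine reduction content lies.

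Finally, for $(3) \Leftrightarrow (4)$ I would compute $\delta \mathcal{A}_{red}$ directly. Writing
\[
\delta \mathcal{A}_{red} = \int_0^t \left[ \left\langle \pder{\ell}{b}, \delta b \right\rangle + \left\langle \pder{\ell}{\dot{b}}, \frac{D \delta b}{Dt} \right\rangle + \left\langle \pder{\ell}{\xi_b}, \delta^{\sigma}(b,\xi_b) \right\rangle \right] dt
\]
and substituting \eqref{eq:vert_variations}, I would integrate by parts using the covariant derivative on $TB$ and its dual \eqref{eq:dual_cov_der} on $\tilde{\mathfrak{g}}^*$, the boundary terms vanishing since both $\delta b$ and $\eta_b$ vanish at the endpoints. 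Collecting the coefficient of the free curve $\eta_b$ yields $-\frac{D}{Dt}(\partial \ell/\partial \xi_b) - \ad^*_{(b,\xi_b)}(\partial \ell/\partial \xi_b)$, whose vanishing is \eqref{eq:LP_ver}; here the $-[\xi_b,\eta_b]$ term contributes the coadjoint term through the definition of $\ad^*$. Collecting the coefficient of $\delta b$, and using $\langle \partial \ell/\partial \xi_b,\, C(\dot{b},\delta b)\rangle = i_{\dot{b}} C_{\partial \ell/\partial \xi_b}(\delta b)$, yields the forced equation \eqref{eq:LP_hor}. Since $\delta b$ and $\eta_b$ are independent and otherwise arbitrary, extremality is equivalent to both equations holding simultaneously. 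I expect the main obstacle throughout to be the infinite-dimensional bookkeeping---legitimizing the dual pairings, the integrations by parts, and the vanishing of boundary terms in the Fr\'echet setting---rather than any new conceptual difficulty, as the algebraic skeleton is that of Lemma 3.1.2 of \cite{CeMaRa2001}.
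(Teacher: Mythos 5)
Your proposal follows essentially the same route as the paper's proof: $(1)\Leftrightarrow(2)$ is taken as standard, $(1)\Leftrightarrow(3)$ rests on Propositions \ref{prop:vert_var} and \ref{prop:hor_var} plus a correspondence of variations, and $(3)\Leftrightarrow(4)$ is the same covariant integration by parts collecting the independent coefficients of $\eta_b$ and $\delta b$ to produce \eqref{eq:LP_ver} and \eqref{eq:LP_hor}. The one step you flag but leave open---that every admissible pair $(\delta b, \eta_b)$ is actually realized by a variation $\delta q$ with fixed endpoints---is settled in the paper by the explicit reconstruction formula $\delta \varphi = \left( \sigma(\delta b) + \eta_b \right) \circ \varphi$.
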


\begin{proof}
  The equivalence of (1) and (2) is standard.
  The equivalence of (1) and (3) follows from taking a deformation,
  $(b,\varphi)_{t,\lambda}$, of $(b,\varphi)( t)$ with fixed
  end-points.
  Note that $\xi_b = \dot{\varphi} \circ \varphi^{-1} - \sigma(b,\dot{b})$.
  By Proposition \ref{prop:vert_var} and
  \ref{prop:hor_var} we see that $\delta^\sigma(b,\xi_b) =
  \frac{D}{Dt}(b,\eta_b) + (b,[\xi_b,\eta_b] ) + C(\dot{b},\delta b)$ and $\delta b$ is a variation of $b(t)$ with
  fixed endpoints.  Therefore, if $q(t)$ extremizes $\mathcal{A}$ with
  respect to arbitrary variations with fixed end points, then clearly
  $(b,\dot{b},\xi_b)(\cdot)$ extremizes $\mathcal{A}_{red}$ with respect
  to the variations of the desired form.  Conversely, variations
  $\delta b$ and $\delta^{\sigma} (b,\xi_b) = \frac{D
    }{Dt}(b,\eta_b) + (b, [\eta_b,\xi_b] )+ C(\dot{b},\delta
  b)$ produce arbitrary variations of $(b,\varphi)_t$ by the formula
  $\delta \varphi = \left( \sigma(\delta b) + \eta_b
  \right) \circ \varphi$.  Thus (1) and (3) are equivalent.

  Now we will prove the equivalence of (3) and (4). By assuming
  (3) and taking variations we find
 \begin{align*}
    0 &= \delta \int_{0}^{T}{ \ell(b,\dot{b}, \xi_b) dt } \\
  &= \int_{0}^{T}{ \left \lb \pder{\ell}{ \xi_b } , \delta^{\sigma}
      (b,\xi_b) \right \rb +
    \left \lb \pder{\ell}{b} , \delta b \right \rb + \left \lb \pder{\ell}{\dot{b}} ,
    \frac{D}{D\lambda}( \dot{b}) \right \rb
    dt } \\
  &= \int_{0}^{T}{ \left \lb \pder{\ell}{\xi_b} , \frac{D}{Dt}(b,\eta_b) - [(b,\xi_b),(b,\eta_b)] +
    C(\dot{b},\delta b) \right \rb + \left \lb \pder{ \ell}{b} , \delta b
  \right \rb + \left \lb \pder{ \ell }{\dot{b}} , \frac{D}{Dt}(\delta b)
  \right \rb dt} \\
   &= \int_{0}^{T}{ \left \lb \pder{ \ell }{ \xi_b } ,
       \frac{D}{Dt}(b,\eta_b) - \ad_{(b,\xi_b)}(b,\eta_b) \right \rb
       + \left \lb \pder{ \ell }{b} - \frac{D}{Dt} \left(
       \pder{ \ell }{\dot{b}} \right) + i_{\dot{b}}C_{\partial \ell / \partial \xi_b} , \delta b \right \rb dt } \\
   &= \int_{0}^{T}{ \left \lb -\frac{D}{Dt} \left( \pder{ \ell }{\xi_b} \right) -
     \ad_{(b,\xi_b)}^* \left( \pder{l}{\xi_b} \right) ,
     (b,\eta_b) \right \rb + \left\lb
    \pder{ \ell }{b} - \frac{D}{Dt} \left( \pder{ \ell }{\dot{b}} \right) +
     i_{\dot{b}}C_{\partial \ell / \partial \xi_b}  ,
     \delta b \right\rb dt}.
  \end{align*}
  As $\eta_b$ and $\delta b$ are arbitrary we arrive at (4).  The
  above argument is reversible, and thus we have proven equivalence of
  (3) and (4) as well.
\end{proof}

  Of course, when $M$ is a flat manifold such as $\mathbb{R}^d$, most people have opted to write the equations of motion for a body immersed in a fluid in the form of equations \eqref{eq:EofM1} and \eqref{eq:EofM2}.
  While the Lagrange-Poincar\'e equations appear different, they assume the same form as the standard equations for fluid-structure interactions when the underlying manifold is $\mathbb{R}^d$, as we now show.

\begin{thm} \label{thm:main2}
Let $M = \mathbb{R}^d$.  Then \eqref{eq:LP_hor} and \eqref{eq:LP_ver} are equivalent to
\begin{align} 
\frac{D}{Dt} \left( \pder{L_{B}}{\dot{b}} \right) -
\pder{L_{B}}{b} &= F_p \label{eq:normal_ppl2} \\
\pder{u}{t} + u \cdot \nabla u &= -\nabla p \label{eq:normal_ppl1} 
\end{align}
 where $u = \xi_b + \sigma(b,\dot{b})$ , $\nabla p$ is a
 Lagrangian parameter which enforces incompressibility, and $F_p \in T^*
 B$ is a force on the boundary of the body given by 
\begin{equation} \label{boundary_force}
\lb F_p , \delta b \rb = - \int_{\partial \bulk_b}{ \lb p(x)
  n(x) , (\delta b \circ b^{-1})(x) \rb d^3x}.
\end{equation}
\end{thm}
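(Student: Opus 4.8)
The plan is to analyse the two Lagrange--Poincar\'e equations separately: I will show that the vertical equation \eqref{eq:LP_ver} is the Euler equation \eqref{eq:normal_ppl1} for $u = \xi_b + \sigma(b,\dot b)$, and that the horizontal equation \eqref{eq:LP_hor} is the body equation \eqref{eq:normal_ppl2} once the fluid's contributions are recognised as the pressure force $F_p$. Throughout I use that on $M = \mathbb{R}^d$ the metric is flat, so the musical isomorphism identifies the velocity $u$ with the momentum one-form $u^\flat$ and the fluid spatial derivatives take their ordinary Euclidean form.

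For the vertical equation, the first step is the fiber derivative. Differentiating $\ell_{\mathrm{fluid}} = \tfrac12\int_{\bulk_b}\|\xi_b + \sigma(b,\dot b)\|^2\mu$ in a boundary-tangent direction $\delta\xi_b \in \tilde{\mathfrak{g}}$ gives $\langle\partial\ell/\partial\xi_b,\delta\xi_b\rangle = \int_{\bulk_b}\langle u,\delta\xi_b\rangle\mu$, so that $\partial\ell/\partial\xi_b$ is represented by the one-form $m := u^\flat$. Substituting the dual covariant derivative \eqref{eq:dual_cov_der} into \eqref{eq:LP_ver}, the covariant time derivative contributes $\ad^*_{\sigma(b,\dot b)}$ and the right-hand side contributes $\ad^*_{\xi_b}$; using linearity of $\ad^*$ in its subscript and $u = \xi_b + \sigma(b,\dot b)$, these combine into the single relation $\dot m + \ad^*_u m = 0$ in $\tilde{\mathfrak{g}}^*$. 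The final step is to unwind $\ad^*_u$: pairing against $\eta_b \in \tilde{\mathfrak{g}}$ and integrating by parts (using $\nabla\cdot u = 0$) converts the coadjoint term into the convective term $(u\cdot\nabla u)^\flat$ up to a gradient, where the boundary contributions generated by the integration by parts must be matched against the transport terms hidden in the covariant time derivative. Since $\tilde{\mathfrak{g}}^*$ is realised as one-forms modulo exact forms, lifting the resulting identity to a genuine equation on one-forms introduces an exact term $-dp$, and absorbing $\tfrac12 d\|u\|^2$ into $p$ yields exactly \eqref{eq:normal_ppl1}; incompressibility is automatic because $u \in \mathfrak{X}_{\mathrm{div}}(\bulk_b)$.

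For the horizontal equation I would split $\ell = L_B + \ell_{\mathrm{fluid}}$. The $L_B$-terms reproduce the left-hand side of \eqref{eq:normal_ppl2} verbatim, so the content of the theorem is the identity
\[
\frac{D}{Dt}\Big(\pder{\ell_{\mathrm{fluid}}}{\dot b}\Big) - \pder{\ell_{\mathrm{fluid}}}{b} - i_{\dot b} C_{\partial\ell/\partial\xi_b} = -F_p.
\]
Here I would use that $\sigma$ is linear in $\dot b$, so that $\langle\partial\ell_{\mathrm{fluid}}/\partial\dot b,\delta b\rangle = \int_{\bulk_b}\langle u,\sigma(\delta b)\rangle\mu$; expand the curvature pairing $C_{\partial\ell/\partial\xi_b}(\dot b,\delta b) = \int_{\bulk_b}\langle u,\, d\sigma(\dot b,\delta b) - [\sigma(\dot b),\sigma(\delta b)]\rangle\mu$ via Proposition~\ref{prop:hor_var}; and compute $\partial\ell_{\mathrm{fluid}}/\partial b$ by a Reynolds-transport (Hadamard) variation that accounts for the motion of the boundary $\partial\bulk_b$ as $b$ changes. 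After substituting the Euler equation established from the vertical equation and integrating by parts, I expect all interior integrals to cancel, leaving only a boundary integral. The no-penetration condition \eqref{eq:P}, which fixes the normal components of both $u$ and $\sigma(b,\dot b)$ at $\partial\bulk_b$ in terms of $\dot b\circ b^{-1}$, then collapses this boundary integral to $-\int_{\partial\bulk_b}\langle p\,n,\delta b\circ b^{-1}\rangle$, which is $\langle F_p,\delta b\rangle$ by \eqref{boundary_force}.

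The main obstacle is the moving-boundary bookkeeping, and it appears in both halves. In the vertical equation it is the cancellation between the integration-by-parts boundary terms and the transport terms inside the covariant derivative, which is what guarantees the clean bulk equation \eqref{eq:normal_ppl1}. In the horizontal equation it is the more delicate task of showing that the domain-variation boundary terms from $\partial\ell_{\mathrm{fluid}}/\partial b$, the curvature term $i_{\dot b}C_{\partial\ell/\partial\xi_b}$, and the bulk terms produced by integrating the convective term by parts all conspire to cancel in the interior, isolating precisely the pressure-times-normal-displacement integral that defines $F_p$. A convenient way to organise this calculation, and to avoid differentiating integrals over a moving domain directly, is to pull everything back to the fixed reference container $\bulk_{\mathrm{ref}}$ via $\varphi$, carry out the variation there, and push forward at the end.
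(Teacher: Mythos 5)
Your proposal follows essentially the same route as the paper's own proof: the vertical equation is reduced to Arnold's form $\dot{u}^\flat + \ad^*_u u^\flat = 0$ of the Euler equation via the identification $\partial \ell / \partial \xi_b = u^\flat$ and the dual covariant derivative \eqref{eq:dual_cov_der}, while the horizontal equation is handled by splitting $\ell = L_B + \ell_{\mathrm{fluid}}$, computing the fluid terms with the Reynolds transport theorem on the moving domain, substituting the already-established Euler equation, and collapsing the surviving boundary integral to $F_p$ using the tangency of $\xi_b$ to $\partial \bulk_b$ and the no-penetration property of $\sigma$. The only difference is organizational (your optional suggestion to pull back to the reference container, where the paper instead works directly on $\bulk_b$), so the two arguments are essentially identical.
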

\begin{proof}
Assume that the Lagrange-Poincar\'{e} equations \eqref{eq:LP_hor} and \eqref{eq:LP_ver} hold.  Our goal is to show that these equations yield the same dynamics as equations \eqref{eq:normal_ppl1} and \eqref{eq:normal_ppl2}.  This proof is long so we will begin with a roadmap.

{\bf Roadmap:}

\begin{enumerate}
  \item[\bf 1.] We prove that \eqref{eq:LP_ver} is equivalent to \eqref{eq:normal_ppl1}.
  \item[\bf 2.] We note that the horizontal operator
    \[
    \mathcal{LP}_{\hor}( \cdot ) := \frac{d}{dt} \left( \pder{ (\cdot)}{\dot{b}} \right) - \pder{(\cdot)}{b} 
    \]
    is linear on $C^{\infty}(TB \oplus \tilde{\mathfrak{g}})$.  Therefore the right hand side of \eqref{eq:LP_hor} may be written as $\mathcal{LP}_{\hor}(L_{B}) + \mathcal{LP}_{\hor}(\ell_{\mathrm{fluid}}) = C_{\partial \ell / \partial \xi_b}( \dot{b}, \cdot)$.
 \item[\bf 3.] We compute $\mathcal{LP}_{\hor}( \ell_{\mathrm{fluid}})$ as follows:
    \begin{enumerate}
      \item[\bf 3a.] We compute $ \pder{\ell_{\mathrm{fluid}}}{b}$;
      \item[\bf 3b.] We then compute $ \frac{D}{Dt} \left(  \pder{\ell_{\mathrm{fluid}}}{ \dot{b} } \right)$; 
      \item[\bf 3c.] We observe that \eqref{eq:LP_hor} can be written as $\mathcal{LP}_{\hor}(\ell_{\mathrm{fluid}}) =  C_{\partial \ell / \partial \xi_b}(\dot{b}, \cdot) - F$ for a judiciously chosen $F$.
      \end{enumerate}
  \item[\bf 4.] The linearity of $\mathcal{LP}_{\mathrm{hor}}$ implies that $\mathcal{LP}_{\hor}(L_{B}) = F$.
  \item[\bf 5.] Lastly, we prove that $F \equiv F_p$, where $F_p$ is the boundary force defined in \eqref{boundary_force}.
\end{enumerate}

{\bf Step 1.}
From now on we will omit the basepoint in the expressions involving $\sigma$, and  denote $\sigma(b, \dot{b})$ simply by $\sigma(\dot{b})$ (and similarly for $\sigma(\delta b)$).

It is simple to verify that
  \[
   \left\lb \pder{ \ell}{\xi_b}, \delta \xi_b \right\rb = \int_{\bulk_{b}}{ \lb \xi_b + \sigma(\dot{b}) , \delta \xi_b \rb d^3 x } = \int_{\bulk_{b}}{ \lb u , \delta \xi_b \rb d^3 x}.
  \]
  In other words, $\pder{ \ell }{\xi_b} = u^{\flat}$ where we have invoked the Riemannian flat operator.  By equation \eqref{eq:dual_cov_der} we see that
\[
\frac{Du^\flat }{Dt} = \pder{u^\flat}{t} + \ad^*_{\sigma(\dot{b})}( u^\flat).
\]
If we substitute these identities into the vertical LP equation we get
\[
\pder{u^\flat}{t} + \ad^*_{\sigma(\dot{b}) }(u^\flat) = -
\ad^*_{(b, \xi_b) }(u^\flat).
\]
which implies $\pder{u^\flat}{t} + \ad^*_{u} ( u^{\flat}) = 0$. This is Arnold's description of the Euler equation $\pder{u}{t} + u \cdot \nabla u = -\nabla p$ \cite[Section1.5]{ArKh1992}.

{\bf Step 2.}
By inspection we can see that $\mathcal{LP}_{\hor}$ is linear and so the horizontal LP equation can be written as
    $\mathcal{LP}(L_{B}) + \mathcal{LP}(\ell_{\mathrm{fluid}}) = C_{\partial \ell / \partial \xi_b}( \dot{b}, \cdot)$,
    where $\ell_{\mathrm{fluid}}$ is the kinetic energy of the fluid, so that $\ell = L_{B} + \ell_{\mathrm{fluid}}$.
   Moreover, note that $\pder{\ell }{b} = \pder{ \ell_{\mathrm{\mathrm{fluid}}}}{b}+ \pder{L_B}{b}$.
   
{\bf Step 3a}
  By definition $\pder{\ell_{\mathrm{fluid}}}{b}(b,\dot{b},\xi_b) \in T^*_b B$ is given by
  \[
  	\left\lb \pder{ \ell_{\mathrm{fluid}}}{b}(b,\dot{b},\xi_b) , \delta b \right\rb = \left. \frac{d}{d \epsilon} \right|_{\epsilon = 0} \ell_{\mathrm{fluid}}(b_\epsilon, \dot{b}_{\epsilon}, \xi_{b,\epsilon})
  \]
  where $(b_\epsilon, \dot{b}_\epsilon, \xi_{b,\epsilon})$ is an $\epsilon$-dependent curve in $TB \oplus \tilde{\mathfrak{g}}$ such that $\delta b = \left. \frac{d}{d\epsilon} \right|_{\epsilon = 0} b_\epsilon$ and
  $\frac{D}{D\epsilon}(b_\epsilon, \dot{b}_{\epsilon} , \xi_{b,\epsilon})\big|_{\epsilon=0} = 0$; see Definition~\ref{def:partial_derivatives}.
  By equation \eqref{eq_cov_deriv} this implies
  \begin{align} \label{eq:hor_var_of_xi}
  	\left. \frac{d}{d \epsilon} \right|_{\epsilon = 0} \xi_{b,\epsilon} = [ \sigma( \delta b) , \xi_b].
  \end{align}
  By the Reynolds transport theorem we find that
      \begin{align}
        &\left \lb \pder{\ell_{\mathrm{fluid}}}{b}( b, \dot{b}, \xi) ,\delta b \right \rb = \restr{ \frac{d}{d \epsilon }
        }{\epsilon=0} \left( \frac{1}{2} \int_{\bulk_{b_{\epsilon}} }{ \| \xi_{b,\epsilon} +
            \sigma(\dot{b}_{\epsilon}) \|^2 d^3 x } \right) \nonumber \\
        &\qquad = \frac{1}{2} \int_{\partial \bulk_{b}}{ \| \xi_{b} + \sigma(\dot{b}) \|^2
          \sigma(\delta b) \cdot n \, d^2 x } +
        \frac{1}{2}\int_{\bulk_{b}}{ \left. \frac{d}{d \epsilon} \right|_{\epsilon = 0} \left( \| \xi_{b,\epsilon} + \sigma( \dot{b}_\epsilon) \|^2 \right)
          d^3 x} \label{eq:computation01}.
      \end{align}
      Here we have used the fact that the boundary of $\bulk_{b_{\epsilon}}$ moves with velocity $\sigma(\delta b)$ as we vary $\epsilon$ from $0$.
      We can then focus on the second term in the above sum and apply the covariant derivative on $T^*B \oplus \tilde{\mathfrak{g}}^*$ to find
      \begin{align*}
      	&\frac{1}{2} \left. \frac{d}{d\epsilon} \right|_{\epsilon = 0} \left( \|  \xi_{b,\epsilon}(x) + \sigma( \dot{b}_\epsilon) \|^2 \right) =
		\left\lb \xi_{b}(x) + \sigma(\dot{b})(x) , \left. \frac{d}{d\epsilon} \right|_{\epsilon = 0}( \xi_{b,\epsilon}(x) + \sigma(\dot{b}_{\epsilon})(x) ) \right\rb \\
	&\qquad	=\left\lb \xi_{b}(x) + \sigma(\dot{b})(x) , - [\xi_{b}, \sigma(\delta b) ] + \left. \frac{d}{d\epsilon} \right|_{\epsilon =0}( \sigma(\dot{b}_\epsilon) ) \right\rb \\
	&\qquad	=\left\lb \xi_{b}(x) + \sigma(\dot{b})(x) , - [\xi_{b}, \sigma(\delta b) ] + \left \lb \pder{ \sigma }{ b }(\dot{b}) , \delta b \right \rb + \sigma\left(\frac{D \dot{b} }{D \epsilon} \right) \right\rb
      \end{align*}
      where the second line is obtained from the first line via equation \eqref{eq:hor_var_of_xi} and the third line is gained by viewing $\sigma$ as a vector-valued one-form and invoking proposition \ref{prop:df=dmf+def}.

 Because $\dot{b}_\epsilon$ is a parallel translate of $\dot{b}$ we
 know that $\frac{D \dot{b}_\epsilon}{D\epsilon}=0$.  This allows us
 to drop one term in the last line.  Substitution into equation \eqref{eq:computation01} yields
 \begin{align*}
 	\left \lb \pder{\ell_{\mathrm{fluid}}}{b}( b, \dot{b}, \xi_b) ,\delta b \right \rb =& \frac{1}{2} \int_{\partial \bulk_{b}}{ \| \xi_{b}(x) + \sigma(\dot{b})(x) \|^2
          \sigma(\delta b) \cdot n \, d^2 x } \\
          &+ \int_{\bulk_{b}}{ \left\lb \xi_{b}(x) + \sigma(\dot{b})(x) , -[\xi_{b}, \sigma(\delta b) ] + \left \lb \pder{ \sigma }{ b }(\dot{b}) , \delta b \right \rb \right\rb d^3 x}.
 \end{align*} 
 Finally, we may reduce clutter by substituting $u = \xi_b + \sigma(\dot{b})$ and invoking the divergence theorem to transform the surface integral into a volume integral.  This yields
  \begin{equation} \label{eq:computation00} \begin{split}
   \left \lb \pder{{\ell}_{\mathrm{fluid}}}{b}(b,\dot{b},\xi_b) , \delta b \right \rb = \int_{\bulk_{b}} \Big\{ & \mathrm{div} \left( \frac{1}{2} \| u \|^2 \sigma(\delta b) \right) \\
   &+ \left \lb u , \left[  \sigma(\delta b), \xi_b \right] + \left \lb \pder{\sigma}{b}(\dot{b}),\delta b \right \rb \right \rb \Big\} d^3 x 
 \end{split}\end{equation}
 {\bf Step 3b}
 We find that the fiber derivative of ${\ell}_{\mathrm{fluid}}$ is given by
\begin{align*}
\left \lb \pder{{\ell}_{\mathrm{fluid}}}{\dot{b}} (b,\dot{b},\xi_b) , \delta b \right \rb &= \restr{\frac{d}{d\epsilon}}{\epsilon = 0}  \ell_{\mathrm{fluid}}(b, \dot{b} + \epsilon \delta b,\xi_b) \\
&= \int_{\bulk_{b}}{
  \left \lb \xi_b + \sigma(\dot{b}) , \sigma(\delta b)
  \right \rb d^3 x}\\
   &=\lb u^\flat , \sigma(\delta b) \rb.
\end{align*}
  By the definition of the covariant derivative on $\tilde{\mathfrak{g}}^*$ we
find
\begin{align}
\left \lb \frac{D}{Dt} \left( \pder{{\ell}_{\mathrm{fluid}}}{\dot{b}} \right) , \delta b
\right \rb &=
\underbrace{\frac{d}{dt}  \left \lb \pder{ {\ell}_{\mathrm{fluid}} }{\dot{b}}, \delta
    b \right \rb
}_{T_1} - \underbrace{ \left \lb \pder{ {\ell}_{\mathrm{fluid}} }{\dot{b}} , \frac{D \delta b}{Dt} \right \rb }_{T_2}. \label{eq:computation02}
\end{align}
We can see by direct computation that
\[
	T_2 = \int_{\bulk_{b}}{ \left\lb u , \sigma \left( \frac{D \delta b}{Dt} \right) \right\rb d^3x}.
\]
This is as far as we need to go with $T_2$ and we will now rework $T_1$.  Firstly, we note that $T_1$ is the time derivative of an integral over a time dependent domain, so we must invoke the Reynolds transport theorem a second time.  This yields the equivalence
\[
	T_1 = \int_{\partial \bulk_b}{ \lb \xi_{b} + \sigma(\dot{b}) , \sigma(\delta b) \rb \lb \xi_b + \sigma(\dot{b}) , n \rb d^2 x} + \int_{\bulk_b}{ \frac{d}{dt}  \lb \xi_b +\sigma(\dot{b}), \sigma(\delta b) \rb } d^3x
\]
By construction $\lb \xi_b , n \rb = 0$ on the boundary of $\bulk_b$ so that
\[
	T_1 = \int_{\partial \bulk_b}{ \lb \xi_{b} + \sigma(\dot{b}) , \sigma(\delta b) \rb \lb \sigma(\dot{b}) , n \rb d^2 x} + \int_{\bulk_b}{ \frac{d}{dt}  \lb u, \sigma(\delta b) \rb \, d^3 x}.
\]
Finally substituting $u = \sigma(\dot{b}) + \xi_b$ yields
\[
	T_1 = \int_{\partial \bulk_b}{ \lb u , \sigma(\delta b) \rb \lb \sigma(\dot{b}), n \rb d^2 x} + \int_{\bulk_{b}}{\left( \left \lb \pder{u}{t} , \sigma(\delta b) \right \rb + \left \lb u , \pder{}{t} \sigma(\delta b) \right \rb \right) d^3 x}.
\]
The term $\pder{}{t} \sigma(\delta b)$ can be handled by invoking proposition \ref{prop:df=dmf+def} a second time.  Explicitly, this give us the equivalence
\[
 \pder{}{t} \sigma(\delta b) = \left\lb \pder{\sigma}{b} (\delta b) , \dot{b} \right\rb + \sigma \left( \frac{D \delta b}{Dt} \right)
\]
where we have used the fact that $\sigma$ is fiberwise linear on $TB$, and therefore $\sigma$ is equal to its own fiber derivative.  We can substitute the above computation into $T_1$ to get the final expression
\begin{multline*}
	T_1 =  \int_{\partial \bulk_b}{ \lb u , \sigma(\delta b) \rb \lb \sigma(\dot{b}), n \rb d^2 x} \\ + \int_{\bulk_{b}}{ \left( \left\lb \pder{u}{t} , \sigma(\delta b) \right\rb + \left\lb u , \left\lb \pder{\sigma}{b} (\delta b) , \dot{b} \right\rb + \sigma \left( \frac{D \delta b}{Dt} \right) \right\rb \right) d^3 x}
\end{multline*}
We can finally revisit equation \eqref{eq:computation02}, and write it as
\begin{align*}
&\left \lb \frac{D}{Dt} \left( \pder{{\ell}_{\mathrm{fluid}}}{\dot{b}} \right) , \delta b \right \rb = \, T_1 - T_2 \\
	&\quad = \int_{\partial \bulk_b}{  \lb u , \sigma(\delta b) \rb \lb \sigma(\dot{b}) , n \rb d^2 x } + \int_{\bulk_b}\left( \left\lb \pder{u}{t} , \sigma(\delta b) \right\rb + \left\lb u , \left \lb \pder{\sigma}{b}(\delta b) , \dot{b} \right \rb \right\rb \right) d^3 x
\end{align*}
We can substitute the vertical equation $\pder{u}{t} = -\nabla p - u \cdot \nabla u$ and (as is customary) we will convert the surface integral into a volume integral via the divergence theorem.  This yields
\begin{multline} \label{eq:computation03} 
	\left \lb \frac{D}{Dt} \left( \pder{\ell_{\mathrm{fluid}}}{\dot{b}} \right), \delta b \right \rb =\\ \int_{\bulk_b} \Big\{  \mathrm{div} \left( \lb u , \sigma(\delta b) \rb \sigma(\dot{b}) \right) 
		- \lb \nabla p + u \cdot \nabla u, \sigma(\delta b) \rb + \left \lb u , \left \lb \pder{\sigma}{b}(\delta b) , \dot{b} \right \rb \right \rb \Big\} d^3 x 
\end{multline}

{\bf Step 3c}
By subtracting \eqref{eq:computation00} from \eqref{eq:computation03} we find
\begin{align*}
	\left\lb \frac{D}{Dt}  \left( \pder{\ell_{\mathrm{fluid}}}{\dot{b}} \right) - \pder{\ell_{\mathrm{fluid}}}{b}, \delta b \right \rb=  \int_{\bulk_b} \Big\{ & \mathrm{div} \left( \lb u , \sigma(\delta b) \rb \sigma(\dot{b}) - \frac{1}{2} \|u \|^2 \sigma(\delta b) \right) \\
	&- \lb \nabla p + u \cdot \nabla u , \sigma(\delta b) \rb 
	- \lb u , [\sigma(\delta b) , \xi_b ] \rb \\
	& + \left \lb u , \left \lb \pder{\sigma}{b}(\delta b) , \dot{b} \right \rb - \left \lb \pder{ \sigma}{b}(\dot{b}) , \delta b \right \rb \right \rangle \Big\}  d^3x
\end{align*}
We may now invoke proposition \ref{prop:da(v,w)=da(w)v-da(v)w} to replace the term $\left \lb \pder{\sigma}{b}(\delta b) , \dot{b} \right \rb - \left \lb \pder{ \sigma}{b}(\dot{b}),\delta b\right \rb$ with $d \sigma(\dot{b}, \delta b)$.
Finally we use the equivalence $C(\dot{b}, \delta b) = d\sigma(\dot{b},\delta b) - [\sigma(\dot{b}), \sigma(\delta b) ]$ to rewrite the last equation as
\begin{align*}
	\left\lb \frac{D}{Dt}  \left( \pder{\ell_{\mathrm{fluid}}}{\dot{b}} \right) - \pder{\ell_{\mathrm{fluid}}}{b}, \delta b \right \rb
	 = \int_{\bulk_b} \Big\{ & \mathrm{div} \left( \lb u , \sigma(\delta b) \rb \sigma(\dot{b}) - \frac{1}{2} \|u \|^2 \sigma(\delta b) \right) \\
	&- \lb \nabla p + u \cdot \nabla u , \sigma(\delta b) \rb 
	- \lb u , [\sigma(\delta b) , \xi_b ] \rb \\
	&- \lb u , [\sigma(\delta b) , \sigma(\dot{b}) ] \rb 
	+ \lb u , C(\dot{b},\delta b) \rb \Big\} d^3x
\end{align*}
  We define the term $C_{\partial \ell / \partial \xi_b}(\dot{b},\delta b) = \int_{\bulk_b}{\lb u , C(\dot{b},\delta b) \rb d^3x}$, so that we may conclude 
  \begin{equation} \label{end_of_step_3}
  	\mathcal{LP}_{\hor}( \ell_{\mathrm{fluid}}) = C_{\partial \ell / \partial \xi_b}(\dot{b}, \cdot) - F(b,\dot{b},u).
\end{equation}
for some $F(b,\dot{b},u) \in T^{\ast}B$.

{\bf Step 4.}
Given the linearity of $\mathcal{LP}_{\hor}$, \eqref{eq:LP_hor} give us
  \begin{align*}
  	C_{\partial \ell / \partial \xi_b}(\dot{b}, \cdot ) &= \mathcal{LP}_{\hor}( \ell ) = \mathcal{LP}_{\hor}( \ell_{\mathrm{fluid}}) + \mathcal{LP}_{\hor}(L_{B}) \\
	&= C_{\partial \ell / \partial \xi_b}(\dot{b}, \cdot) - F + \mathcal{LP}_{\hor}(L_{B}),
  \end{align*}
  so that $\mathcal{LP}_{\mathrm{hor}}(L_{B}) = F$.
  
  {\bf Step 5.}
  To reduce clutter, set $v = \sigma(\delta b)$ and $w = \sigma(\dot{b})$ so that the term given by $F$ may be rewritten as
\[
F = \int_{\bulk_{b}}{ \mathrm{div} \left(  (u \cdot v) w -
    \frac{1}{2} (u \cdot u) v \right) + u \cdot  [v,u]_{JL}  - ( \nabla p
  + u \cdot \nabla u ) \cdot v \, d^3x}
\]
  Using Einstein's summation notation we may write $F$ as:
\[
F =  \int_{\bulk_{b}}{ \pder{}{x^k} \left( u^iv^iw^k -
      \frac{1}{2}u^iu^iv^k \right) + u^k \left( \pder{u^k}{x^i}v^i -
      \pder{v^k}{x^i}u^i \right) - \left( \pder{p}{x^k}v^k + u^i v^k
      \pder{u^k}{x^i} \right) d^3x}.
\]
Using the fact that $u$ and $v$ are divergence free, so that 
 $\pder{v^k}{x^k} =
 \pder{u^k}{x^k} = 0$, this simplifies to 
\begin{align*}
F &= \int_{\bulk_{b}}{\pder{}{x^k}( v^iu^i) (w^k-u^k) -
   \pder{p}{x^k}v^k d^3x } \\
&= \int_{\bulk_{b}}{ \mathrm{div} \left( - p(x) v(x) -
    \left( v(x) \cdot u(x) \right) \xi_b(x) \right) d^3x} \\
&= \int_{\partial \bulk_{b}}{ \lb - p(x) \sigma(\delta b)(x) -
  \lb \sigma(\delta b) , u(x) \rb \xi_b(x) , n(x) \rb d^2x}.
\end{align*}
Noting that $\xi_b = u -
 \sigma(b,\dot{b}) \equiv u - v$ is tangent to the boundary, and therefore
 orthogonal to the unit normal $n$ we can drop the term
 involving $\xi_b$ so that
\[
F = - \int_{\partial \bulk_b}{ \lb p(x) \sigma(\delta b)(x), n(x)
  \rb d^2x }
\]  
and by the no-slip condition of $\sigma$ we find
\[
F = - \int_{\partial \bulk_{b}}{ \lb p(x) n(x) , (\delta b  \circ b^{-1})(x)  \rb d^2x },
\]
so that $F = F_p$, as claimed.

Thus we may conclude that the Euler-Lagrange equations for $L_{B}$ are given by
\[
  \left\lb \frac{D}{Dt} \left( \pder{L_{B}}{\dot{b}} \right) -
  \pder{L_{B}}{b} , \delta b \right\rb =
  - \int_{\partial \bulk_{b}}{\lb p(x) n(x), (\delta b \circ
    b^{-1})(x) \rb d^2 x}
\]
for arbitrary variations $\delta b$ and the theorem follows.
\end{proof}

\section{Dissipative Lagrangian mechanics in the finite Reynolds regime} \label{sec:Navier_Stokes}

To add a viscosity to our formulation we need to alter our constructions in two ways.
Firstly, we incorporate the no-slip boundary condition directly into our configuration manifold.
We do this by using the submanifold
\[
Q_{\ns} := \{ (b,\varphi) \in Q  \mid b(x) = \varphi( b_{\rm ref}(x ) ) \quad \forall x \in \partial \body \}.
\]
  The group of particle relabeling symmetries is now the group of diffeomorphisms of $\bulk_{\rm ref}$ which keep the boundary point-wise fixed,
  \begin{align*}
  G_{\ns} := \{ \psi \in \SDiff( \bulk_{\rm ref} ) \mid \psi(x) = x \quad \forall x \in \partial \bulk_{\rm ref} \}.
  \end{align*}
  The Lie algebra of $G_{\ns}$ is the set
  \[
  	\mathfrak{g}_{\mathrm{ns}} = \{ \eta \in \mathfrak{X}_{\vol}(\bulk_{\rm ref}) \quad \vert \quad \eta(x) = 0 , \forall x \in \partial \bulk_{\rm ref} \},
  \]
and $P_{\ns} = TQ_{\ns} / G_{\ns}$ changes accordingly: $P_{\ns}$ consists of all triples $(b, \dot{b}, u)$ such that $u(x) = \dot{b}(b^{-1}(x))$ for all $x \in \partial \bulk_b$.
Similarly, the bundle $\tilde{\mathfrak{g}}_{\ns}$ consists of pairs $(b, \xi_b)$ such that $\xi_b \in \mathfrak{X}_{\rm div}(\bulk_b)$ satisfies $\xi_b(x) = 0$ for any $x \in \partial \bulk_b$.
It can be proven, as before, that $P_{\ns}$ is isomorphic to $TB \oplus \tilde{\mathfrak{g}}_{\ns}$ upon choosing a section $\sigma: (b,\dot{b}) \in TB \mapsto \sigma(b,\dot{b}) \in \mathfrak{X}_{\rm div}( \bulk_b)$.
  
  The second change that we need to make is the addition of a viscous friction force. For a viscous fluid with viscosity $\nu > 0$, the viscous force is a map $F_\nu : P_{\ns} \to P_{\ns}^\ast$ defined in Euclidean coordinates by
\begin{align}
	\lb F_{\nu}(b,\delta b,u) , (b,\dot{b},v) \rb & = \nu \int_{ \bulk_{b}}{ \mathrm{trace} ( \nabla u^T \nabla v) \, d^3 x } \nonumber \\
	& = - \nu \int_{\bulk_{b}}{ \lb \Delta u , v \rb \, d^3 x} + \nu \int_{\partial\bulk_{b}} \left< \frac{\partial u}{\partial n}, v \right> \, d^2 x \label{eq:viscous_force}
\end{align}
While this map can also be defined for an arbitrary Riemannian manifold, our prime concern below is with the Euclidean case defined here.
Below, we will also need the pull-back of  $F_\nu$ to $TQ_{\ns}$, denoted by $\tilde{F}_\nu$ and defined by 
\begin{equation} \label{lifted-viscous-force}
  \big\lb \tilde{F}_\nu(b, \dot{b}, \varphi, v_\varphi), (b, \delta b, \varphi, w_\varphi) \big\rb = 
   \left \lb F_\nu (b, \dot{b}, v_\varphi \circ \varphi^{-1}), (b, \delta b, w_\varphi \circ \varphi^{-1}) \right\rb,
\end{equation}
for all $(b, \dot{b}, \varphi, v_\varphi), (b, \delta b, \varphi, w_\varphi) \in TQ_{\ns}$.

Because of the isomorphism between $P_{\ns}$ and $TB \oplus \tilde{\mathfrak{g}}_{\ns}$, we can also view $F_\nu$ as a map from $TB \oplus \tilde{\mathfrak{g}}_{\ns}$ to $T^*B \oplus
  \tilde{\mathfrak{g}}_{\ns}^*$, denoted by the same letter and given by
\[
 \lb F_\nu(b,\dot{b},\xi) , (b,\delta b,\eta) \rb = \nu \int_{\bulk_b}{
   \mathrm{trace} \left( \nabla( \sigma(\dot{b}) + \xi)^T \nabla
     ( \sigma(\delta b) + \eta) \right) \, d^3x }.
\]

\begin{thm} \label{thm:main2_viscous}
Let $L$ be the fluid-structure Lagrangian of equation \eqref{fsi_lagrangian} restricted
to $TQ_{\ns}$ and $\ell: TB \oplus
\tilde{\mathfrak{g}}_{\ns} \to \mathbb{R}$ the reduced
Lagrangian written in terms of a section, $\sigma:(b,\dot{b}) \in TB \to \sigma(b,\dot{b}) \in \mathfrak{X}_{\rm div}(\bulk_b)$.  Given a curve
$q(t) = (b(t), \varphi(t)) \in Q_{\ns}$, let $(b, \dot{b} , \xi_b)_t$ be the induced curve in $TB \oplus \tilde{\mathfrak{g}}_{\ns}$, i.e. such that $\xi_b = \dot{\varphi} \circ \varphi^{-1} - \sigma(b,\dot{b})$.
  Then the following are equivalent:
\begin{enumerate}
\item The curve, $q(\cdot)$, satisfies the variational principle
\[
	\delta \int_0^t L(q,\dot{q}) dt = \int_0^t \langle \tilde{F}_\nu(q,\dot{q}) , \delta q \rangle dt
\]
with respect to variations of $q( \cdot )$ with fixed end points.
\item The curve, $q(\cdot )$, satisfies the externally forced Euler-Lagrange equations
\[
  \frac{D}{Dt} \left( \pder{L}{\dot{q}} \right) - \pder{L}{q} = \tilde{F}_\nu,
\]
where $\tilde{F}_\nu$ is the lift of the viscous force $F_\nu$,
defined in \eqref{lifted-viscous-force}.
\item The curve $(b,\dot{b}, \xi_b)( \cdot )$ satisfies the variational principle
\[
	\delta \int_0^t \ell( b , \dot{b} , \xi_b) dt = \int_0^t \langle F_\nu( b,\dot{b},\xi_b) , \eta_b \rangle dt
\]
with respect to arbitrary variations of the curve $b(t)$ with fixed end-points and covariant variations $\delta^\sigma \xi_b$
 given by \eqref{eq:vert_variations} for an arbitrary curve $(b,\eta_b)(t) \in \tilde{\mathfrak{g}}_{\ns}$ with vanishing end-points.

\item  The curve $(b, \dot{b} , \xi_b)( \cdot )$ satisfies the externally forced Lagrange-Poincar\'{e} equations
  \begin{align*}
    \frac{D}{Dt} \left( \pder{\ell}{\dot{b}} \right) - \pder{l}{b} =
    i_{\dot{b}} C_{\partial \ell / \partial \xi_b}  + F_{\ns} \\
    \frac{D}{Dt} \left( \pder{\ell}{\xi_b} \right) = -\ad_{(b,\xi_b)}^* \left(
      \pder{\ell}{\xi_b} \right) + f_{\nu}
  \end{align*}
where,
\begin{align*}
  \lb F_{\ns}(v_b, \xi_b) , \delta b \rb &= \nu \int_{\partial \bulk_b}{
  \lb n(x) \cdot \nabla u(x)  , \delta b ( b^{-1}(x) ) \rb \, d^2x} \quad \forall \delta b \in T_b B \\
  \lb f_{\nu}(v_b, \xi_b) , \eta_b \rb &= - \nu \int_{\bulk_b}{ \lb \Delta (
    \sigma(v_b) + \xi_b ) , \eta_b \rb \, d^3x } \quad \forall \eta_b \in \tilde{\mathfrak{g}}_{\ns}.
\end{align*}
\item  If $M = \mathbb{R}^d$ then the curve $(b, \dot{b}, \xi_b)(\cdot)$ satisfies
\begin{align*}
  \pder{u}{t} + u \cdot \nabla u &= -\nabla p + \nu \Delta u \\
  \frac{D}{Dt} \left( \pder{L_{B}}{\dot{b}} \right) -
  \pder{L_{B}}{b} &= F_p + F_{\mathrm{ns}}
\end{align*}
where $u = \xi_b + \sigma(b,\dot{b})$, and $F_p$ is the pressure force on the boundary of the body given in
Theorem \ref{thm:main1}.
\end{enumerate}
\end{thm}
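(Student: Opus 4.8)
The plan is to treat Theorem \ref{thm:main2_viscous} as the Lagrange--d'Alembert (externally forced) counterpart of Theorems \ref{thm:main1} and \ref{thm:main2}, so that most of the heavy lifting is already done. First I would observe that the entire reduction apparatus of Sections \ref{sec:variations} and \ref{sec:LP} transfers essentially verbatim once $Q$, $G$, and $\tilde{\mathfrak{g}}$ are replaced by $Q_{\ns}$, $G_{\ns}$, and $\tilde{\mathfrak{g}}_{\ns}$: the action of $G_{\ns}$ is still free and proper, $P_{\ns} \cong TB \oplus \tilde{\mathfrak{g}}_{\ns}$ via $\Psi_\sigma$, and Propositions \ref{prop:vert_var} and \ref{prop:hor_var} still compute the covariant variations of the reduced curve. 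The one structural difference is that the no-slip constraint forces $\eta_b$ to vanish on $\partial \bulk_b$, a fact I would use repeatedly to discard boundary terms.

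With this in hand, the equivalences $(1) \Leftrightarrow (2)$ and $(3) \Leftrightarrow (4)$ are the forced versions of the corresponding equivalences in Theorem \ref{thm:main1}: one simply carries the virtual-work term $\langle \tilde{F}_\nu, \delta q\rangle$ (respectively its reduced counterpart) through the same integration by parts in $t$ that produced the Euler--Lagrange and Lagrange--Poincar\'e operators, the force appearing unchanged on the right-hand side. The reduction $(1) \Leftrightarrow (3)$ rests on the $G_{\ns}$-invariance of $\tilde{F}_\nu$, which holds by construction since \eqref{lifted-viscous-force} defines $\tilde{F}_\nu$ by right translation, together with the identity $\delta\varphi \circ \varphi^{-1} = \sigma(\delta b) + \eta_b$ from the proof of Theorem \ref{thm:main1}, which lets me rewrite $\langle \tilde{F}_\nu, \delta q\rangle$ as a pairing of the reduced force $F_\nu(b,\dot{b},\xi_b)$ against the reduced variation determined by $\delta b$ and $\eta_b$.

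The crux of the argument, and the step I expect to be most delicate, is the bookkeeping that splits the single reduced force $F_\nu$ into the horizontal boundary force $F_{\ns}$ and the vertical bulk force $f_{\nu}$ of statement $(4)$. Here I would integrate by parts exactly as in \eqref{eq:viscous_force}, writing the virtual work as $-\nu\int_{\bulk_b} \langle \Delta u, \cdot\rangle\, d^3 x + \nu\int_{\partial \bulk_b}\langle \partial u/\partial n, \cdot\rangle\, d^2 x$ with $u = \sigma(\dot{b}) + \xi_b$. Because $\eta_b$ vanishes on $\partial \bulk_b$, its boundary contribution drops and only the bulk term $-\nu\int \langle \Delta u, \eta_b\rangle\, d^3x$ survives, which is precisely $f_{\nu}$; because $\sigma(\delta b)$ restricts to the body velocity $\delta b \circ b^{-1}$ on the boundary (the no-slip property of $\sigma$), the surviving boundary term is exactly $F_{\ns}$. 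The care required is in tracking the remaining horizontal bulk term $-\nu\int \langle \Delta u, \sigma(\delta b)\rangle\, d^3x$, which is not yet visible in statement $(4)$ and must be matched against the horizontal equation.

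Finally, for $M = \mathbb{R}^d$ the equivalence $(4) \Leftrightarrow (5)$ is the forced analog of Theorem \ref{thm:main2}. I would reuse Step 1 of that proof to recast the vertical equation as Arnold's form $\partial u^\flat/\partial t + \ad^*_u u^\flat = f_{\nu}$; identifying $f_{\nu}$ with $\nu \Delta u$ modulo a gradient absorbed into the pressure yields $\partial u/\partial t + u\cdot \nabla u = -\nabla p + \nu\Delta u$. For the horizontal equation I would rerun Steps 2--5 of Theorem \ref{thm:main2}, now substituting the \emph{viscous} vertical equation into the computation of $\mathcal{LP}_{\hor}(\ell_{\mathrm{fluid}})$; the extra $\nu \Delta u$ term this produces should cancel the leftover horizontal bulk term noted above, leaving $\mathcal{LP}_{\hor}(L_B) = F_p + F_{\ns}$ as claimed. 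This cancellation is the one genuinely new computation relative to the inviscid case, and confirming it is where I would concentrate the effort.
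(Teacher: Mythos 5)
Your proposal is correct and follows the same route as the paper's own proof: the equivalences among (1), (2), (3) are handled by carrying the virtual-work term through the manipulations of Theorem \ref{thm:main1}, the reduced force is split by integrating \eqref{eq:viscous_force} by parts and using the no-slip boundary data ($\eta_b = 0$ and $\sigma(\delta b) = \delta b \circ b^{-1}$ on $\partial\bulk_b$), and (4)$\Leftrightarrow$(5) reruns Steps 1--5 of Theorem \ref{thm:main2} with the viscous vertical equation. The genuine difference is one of care, at exactly the point you flag as delicate. The paper writes down the full reduced force, including the horizontal bulk term $-\nu\int_{\bulk_b} \lb \Delta u, \sigma(\delta b)\rb \, d^3x$, but that term then disappears from its argument without comment: statement (4) keeps only the boundary force $F_{\ns}$, and the proof claims $\mathcal{LP}_{\hor}(\ell_{\mathrm{fluid}}) = i_{\dot b} C_{\partial\ell/\partial\xi_b} - F_p$ ``just as in the inviscid case,'' overlooking the compensating term $\nu\int_{\bulk_b}\lb \Delta u,\sigma(\delta b)\rb \, d^3x$ that the Step-3b computation acquires once the Navier--Stokes rather than the Euler equation is substituted for $\pder{u}{t}$. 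Your plan supplies exactly this missing bookkeeping, and the cancellation you propose to verify does hold: the two bulk terms enter $\mathcal{LP}_{\hor}(\ell)$ (through the force) and $\mathcal{LP}_{\hor}(\ell_{\mathrm{fluid}})$ (through the substitution) with the same sign, hence cancel in the difference $\mathcal{LP}_{\hor}(L_B) = \mathcal{LP}_{\hor}(\ell) - \mathcal{LP}_{\hor}(\ell_{\mathrm{fluid}})$, leaving only boundary forces; moreover the cancellation is insensitive to the sign convention chosen for the d'Alembert principle, since both terms flip sign together. The residual discrepancies you would then meet --- whether the boundary force appears as $+F_{\ns}$ or $-F_{\ns}$, and whether $f_\nu$ as defined in (4) really produces $+\nu\Delta u$ --- are sign inconsistencies internal to the paper's own statement, not defects of your argument. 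In short, your proposal is the paper's argument carried out rigorously, and the cancellation you single out is indeed the step that makes the theorem true.
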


\begin{proof}
  The equivalence of (1) and (2) is standard.
  The equivalent of (1) and (3) is proven using the same manipulations described in Theorem \ref{thm:main1}.
  The equivalence of (2) and (4) is merely a statement of the
  Lagrange-Poincar\'{e} reduction theorem with the addition of a
  force.  Assume (4) and we will prove equivalence with (5).  We
  will deal with the vertical equation first by
  understanding the force on the fluid.   By
  \eqref{eq:viscous_force} the total viscous force is
 \begin{multline*}
   \lb F_\nu(b,\dot{b},\xi_b) , (b, \delta b, \eta_b ) \rb = \\
    - \nu \int_{\bulk_{b}}{ \lb \Delta u , \eta_b + \sigma(\delta b) \rb \, d^3 x} + \nu \int_{\partial\bulk_{b}} \left< \frac{\partial u}{\partial n}, \delta b (b^{-1}(x)) \right> \, d^2 x, 
\end{multline*}
where $v$ of $\eqref{eq:viscous_force}$ is given by $v = \sigma(\delta{b}) + \eta_b$ and we have invoked the boundary conditions $\eta_b = 0$ and $\sigma(\delta b) = \delta b \circ b^{-1}$ on $\partial \bulk_b$.
 
Following the same procedure as for the inviscid case (see ``step 1'' in the proof of Theorem \ref{thm:main2}) we arrive at the equation
\[
\dot{u}^\flat + \ad^*_{u} ( u^{\flat}) = \nu \Delta u^\flat.
\]
on $\bulk_b$.  On $\mathbb{R}^d$ this becomes
\[
\pder{u}{t} + u \cdot \nabla u = - \nabla p + \nu \Delta u.
\]
  Additionally, if we denote the horizontal Lagrange-Poincar\'{e} operator
\[
 \mathcal{LP}_{\hor} := \frac{D}{Dt} \left( \pder{}{\dot{b}} \right) - \pder{}{b}
\]
we find that (just as in the inviscid case), $\mathcal{LP}_{\hor}( \ell_{\mathrm{fluid}} )
= i_{\dot{b}} C_{\partial \ell / \partial \xi_b} - F_{p}$, where
\[
\lb F_{p}(b,\dot{b}) , w_b \rb = - \int_{\partial \bulk_b }{ p(x) \lb
n(x) , w_b ( b^{-1}(x) ) \rb d^2x}
\] 
is the pressure force on the body.
Note that $\ell = L_{B} + \ell_{\mathrm{fluid}}$ and $\mathcal{LP}_{\hor}$ is a
linear operator on the set of reduced Lagrangians.  Therefore the (forced) horizontal Lagrange-Poincare equation states
\[
  \mathcal{LP}_{\hor}(L_{B}) + \mathcal{LP}_{\hor}( \ell_{\mathrm{fluid}} ) =
  i_{\dot{b}} C_{\partial \ell / \partial \xi_b} + F_{\mathrm{ns}}.
\]
  Putting all these pieces together yields the
forced Euler-Lagrange equation on $B$ given by
\[
\frac{D}{Dt} \left( \pder{L_{B}}{\dot{b} } \right) -
\pder{L_{B}}{b} = F_p + F_{\mathrm{ns}}.
\]
Reversing these steps proves the equivalence of (5) with (4).
\end{proof}

\section{Conclusion} \label{sec:conclusion}

In this paper we have identified the configuration manifold of fluid structure interactions
as a principal $G$-bundle and found equations of motion on $P = TQ/G$
which are equivalent to the standard ones obtained in continuum mechanics literature \cite{Batchelor}.
We were then able to derive the equations of motion for a body immersed in a Navier-Stokes fluid by
adding a viscous force and a no-slip condition.
Given this geometric perspective we can extend, specialize, and apply our
findings.  Such endeavors entail future work.
  
{\bf Rigid bodies:}
  We could restrict the groupoid $\mathcal{G}$ of remark \ref{rmk:groupoid} by considering a subgroupoid
  \[
  	\mathcal{G}_{\mathrm{rigid}} := \{ g \in \mathcal{G} \quad \vert \quad \exists A \in \SE(3) , \beta(g) = A \cdot \alpha(g) \}.
  \]
  We expect the resulting equations of motion to be that of a rigid body immersed in an ideal fluid.
  This would be an interesting generalization of Arnold's discovery that the Euler equations are right trivialized geodesic equations on a Lie group, in that we would have found that the equations for a rigid body in an ideal fluid are right trivialized geodesic equations on a Lie groupoid.
  The equations of motion for a rigid body in a fluid have been treated from a geometric point of view in \cite{Radford_thesis, Kanso2005, Vankerschaver2009, VaKaMa2010},
  but so far the relevance of Lie groupoids in this context has not been explored.
  
{\bf Complex fluids:}
    There exists a unifying framework for understanding complex fluids using the notion of advected parameters \cite{GayBalmaz2009}.
    This framework uses the interpretation of an incompressible fluid as an ODE on the set of volume preserving diffeomorphisms and extends the ideal fluid case by appending parameters in a vector space which are advected by the diffeomorphisms of the fluid.  This framework captures magneto-hydrodynamics, micro-stretch liquid crystals, and Yang-Mills fluids.
    Merging these ideas with the Lagrange-Poincar\'e formalism presented here should be possible.
    Moreover, the notion of ``advection'' fits naturally in the groupoid theoretic framework of remark \ref{rmk:groupoid} where the source map stores the initial state of the system.
    
{\bf Numerical algorithms:}
  Discrete Lagrangian mechanics is a framework for the construction of
numerical integrators for Lagrangian mechanical systems. These
integrators typically preserve some of the underlying geometry, and as
a consequence have good long-term conservation and stability
properties (see \cite{MaWe2001, Hairer2002}).  The ideas of discrete
mechanics have been extended to deal with mechanical systems on Lie
groups (see \cite{LeeLeok08} and the references therein) and in
\cite{Gawlik2011} a variational integrator for ideal fluid dynamics
was proposed by approximating $\SDiff(M)$ with a finite dimensional
Lie group.  It would be interesting to pursue a melding of these
integration schemes by ``discretizing'' the Lie groupoid
$G_{\mathrm{rigid}}$ to obtain a variational integrator for the
interaction of an ideal fluid with a rigid body by approximating it as
a finite dimensional Lie groupoid. The groupoid framework of this
paper would provide a natural inroad into this problem, the more so
since in \cite{Weinstein1995} it was shown that many variational
integrators can viewed as discrete mechanical systems on Lie
groupoids.

\section{Acknowledgements}
The initial background material for the problem was provided to the first author by Jerrold E. Marsden who continued to serve as a kind and
caring mentor even in death.
Many of the ideas in this paper were clarified through conversations with Alan Weinstein who helped the authors get acquainted with the groupoid literature.
Additionally, the authors would like to thank Fran\c{c}ois Gay-Balmaz, Mathieu Desbrun, Darryl D. Holm, and Tudor S. Ratiu for encouragement and clarifications on fluid mechanics and Lagrange-Poincar\'{e} reduction.
Both authors gratefully acknowledge partial support by the European Research Council Advanced Grant 267382 FCCA.  H.~J. is partially supported by NSF grant CCF-1011944.  J. V. is on leave from a Postdoctoral Fellowship of the Research Foundation--Flanders (FWO-Vlaanderen).
This work is supported by the \textsc{irses} project \textsc{geomech} (nr. 246981) within the 7th European Community Framework Programme.

\appendix

\section{Connections on vector bundles} \label{sec:connections_vb}
In this section we will derive identities from the theory of connections on vector bundles.
These results are mostly used in the proofs of Theorems \ref{thm:main1} and \ref{thm:main2}.

\subsection{Vertical bundles}
  For any vector bundle, $\pi:E \to M$, we may invoke the
  tangent bundle, $\tau_{E}:TE \to E$, and define the vertical bundle,
  $\pi_V : V(E) \to E$, where $V(E) = \mathrm{kernel}( T \pi
)$ and $\pi_V := \restr{\tau_{E}}{V(E)}$.  Equivalently, we may
define the vertical bundle as the set of tangent vectors of $E$ of
the form $\restr{\frac{d}{d\epsilon}}{\epsilon=0}( e + \epsilon f)$
where $f,e$ are in the same fiber.  This gives us the following proposition.

\begin{prop} \label{prop:vertical_lift}
  The map $v^{\uparrow}: (e_1,e_2) \in E \oplus E \mapsto  \restr{ \frac{d}{d \epsilon} }{\epsilon = 0}
\left( e_1 + \epsilon e_2 \right) \in V(E)$ is an isomorphism.
\end{prop}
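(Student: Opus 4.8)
The plan is to reduce the statement to a local trivialization, where the map $v^{\uparrow}$ becomes completely explicit. First I would make precise that the domain $E \oplus E$ is the Whitney sum, so that a pair $(e_1, e_2)$ has $\pi(e_1) = \pi(e_2)$ and the combination $e_1 + \epsilon e_2$ is a well-defined curve inside the single fiber $E_{\pi(e_1)}$. Its velocity at $\epsilon = 0$ is a tangent vector based at $e_1 \in E$, and since $\pi$ is constant along this fiber-curve we have $T\pi \circ v^{\uparrow} = 0$; hence $v^{\uparrow}(e_1,e_2) \in \ker(T\pi) = V(E)$ and $\pi_V(v^{\uparrow}(e_1,e_2)) = e_1$. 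Thus $v^{\uparrow}$ is well-defined and covers the first projection $(e_1,e_2) \mapsto e_1$, so it is a candidate morphism of vector bundles over $E$ (regarding $E \oplus E$ as the pullback bundle $\pi^{\ast} E \to E$ via $\mathrm{pr}_1$).

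Next I would check the algebraic structure: for fixed $e_1$, the assignment $e_2 \mapsto v^{\uparrow}(e_1,e_2)$ is linear, because $\restr{\frac{d}{d\epsilon}}{\epsilon=0}(e_1 + \epsilon(a e_2 + b e_2')) = a\,\restr{\frac{d}{d\epsilon}}{\epsilon=0}(e_1+\epsilon e_2) + b\,\restr{\frac{d}{d\epsilon}}{\epsilon=0}(e_1 + \epsilon e_2')$ by linearity of the derivative and of the fiber operations. So $v^{\uparrow}$ is a vector-bundle morphism over $E$, and it remains only to see that it is a fiberwise isomorphism.

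The technical heart is a coordinate computation. Fix a vector-bundle chart $E|_U \cong U \times V$ with $V$ the model fiber. Writing $e_1 = (x, w_1)$ and $e_2 = (x, w_2)$, the curve is $\epsilon \mapsto (x, w_1 + \epsilon w_2)$ and its velocity is $(0_x, w_2) \in T_x U \times V \cong T_{(x,w_1)}(U \times V)$. Under the same trivialization the vertical fiber is $V(E)_{e_1} = \{0\} \times V$, and $v^{\uparrow}$ restricted to the fiber over $e_1$ is precisely $w_2 \mapsto (0, w_2)$, a linear isomorphism onto $V(E)_{e_1}$. This simultaneously yields injectivity (velocity zero forces $w_2 = 0$, i.e.\ $e_2 = 0$) and surjectivity; in fact surjectivity is already guaranteed by the second description of $V(E)$ recorded before the statement, which defines $V(E)$ to consist exactly of vectors of the form $\restr{\frac{d}{d\epsilon}}{\epsilon=0}(e + \epsilon f)$ with $e,f$ in a common fiber.

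I would then conclude with the standard bundle lemma: a smooth vector-bundle morphism covering the identity that is a linear isomorphism on each fiber is a vector-bundle isomorphism, its inverse being automatically smooth (in the chart above the inverse is $(0,w_2) \mapsto w_2$, and these local inverses patch together). I do not anticipate a genuine obstacle here; the only points requiring care are being explicit that the source is the Whitney sum (so that $e_1 + \epsilon e_2$ is meaningful) and correctly identifying $v^{\uparrow}$ as a morphism over $E$ rather than over $M$.
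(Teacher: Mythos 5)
Your proof is correct. The paper supplies no argument for this proposition at all---it is treated as immediate from the equivalent description of $V(E)$, stated just before it, as the set of velocities $\frac{d}{d\epsilon}\big|_{\epsilon=0}(e+\epsilon f)$ with $e,f$ in a common fiber---and your write-up (Whitney-sum domain so that $e_1+\epsilon e_2$ makes sense, $v^{\uparrow}$ as a morphism over $E$ covering $\mathrm{pr}_1$, the fiberwise computation in a trivialization, and the standard lemma that a fiberwise linear isomorphism covering the identity is a bundle isomorphism) fills in exactly the details the paper elides, in the same spirit as its implicit reasoning.
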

We call the map $v^\uparrow$ of Proposition \ref{prop:vertical_lift} the \emph{vertical lift}.

\subsection{Connections} \label{sec:connections}
There are three types of connections used in this paper:
Levi-Civita connections, connections on a vector bundle, and principal connections. In this section, we will review the first two notions while principal connections will be discussed in Appendix~\ref{app:connections}. In particular, we define the covariant derivative induced by a vector bundle connection, as this concept will be used to formulate the equations of motion throughout the paper; see for instance Theorems \ref{thm:main1} and \ref{thm:main2}.

\begin{definition}\label{def:connection}
  A \emph{horizontal sub-bundle} on a vector bundle, $\pi:E \to M$, is a
  subbundle, $H(E) \subset TE$, such that $TE = H(E) \oplus
  V(E)$. Given a horizontal sub-bundle we define the projections
  $\mathrm{ver}: TE \to V(E)$ and $\mathrm{hor}: TE \to H(E)$.  The
  projection, $\mathrm{ver}$, is called a \emph{connection}.
\end{definition}

This definition of connection appears in \cite{KMS99} where it is
called a ``generalized connection''.  The choice of a connection has a
number of consequences, the most important being the existence of a 
horizontal lift.

\begin{prop} \label{prop:horizontal_lift}
  Given a connection on a vector bundle $\pi: E \to M$,
  there exists an inverse to the map $\restr{ \tau_E \oplus T\pi
  }{H(E)}  : H(E) \to E \oplus TM$.  We denote this inverse by
  $h^\uparrow: E \oplus TM \to H(E)$, and call it the \emph{horizontal
    lift}.
\end{prop}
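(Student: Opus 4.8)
The plan is to verify that $\restr{\tau_E \oplus T\pi}{H(E)}$ is a fiberwise linear isomorphism of vector bundles over $E$, and then to invoke the standard fact that a vector-bundle morphism which is bijective on each fiber admits a smooth inverse. First I would pin down the target. For $w \in T_eE$, naturality of the tangent functor gives $\tau_M(T\pi(w)) = \pi(\tau_E(w)) = \pi(e)$, so the image of $\tau_E \oplus T\pi$ lies in the fiber product $E \oplus TM := E \times_M TM = \pi^{\ast}TM$, which is a vector bundle over $E$ with fiber $T_{\pi(e)}M$ over $e$. Since $H(E) \subset TE$ is a subbundle over $E$ and $\tau_E \oplus T\pi$ intertwines the projections to $E$, the restriction $\restr{\tau_E \oplus T\pi}{H(E)}$ is a morphism of vector bundles over $E$.

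Next I would reduce the claim to a pointwise statement. Fix $e \in E$ and set $x = \pi(e)$. Along the fiber $T_eE$ the component $\tau_E$ is constant and equal to $e$, so on the fiber over $e$ the morphism is just $T_e\pi$ paired with the constant $e$; hence it is bijective onto $\{e\}\times T_xM$ exactly when $\restr{T_e\pi}{H_e(E)}$ is an isomorphism onto $T_xM$. Because $\pi$ is a submersion, $T_e\pi : T_eE \to T_xM$ is surjective with kernel $V_e(E) = \ker(T_e\pi)$. The defining property of a connection (Definition~\ref{def:connection}) is that $T_eE = H_e(E) \oplus V_e(E)$, so $H_e(E)$ is a linear complement to $\ker(T_e\pi)$, and therefore $\restr{T_e\pi}{H_e(E)}$ is a linear isomorphism onto $T_xM$.

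Having established fiberwise bijectivity, I would conclude that $\restr{\tau_E \oplus T\pi}{H(E)}$ is a vector-bundle isomorphism over $E$, define $h^{\uparrow} : E \oplus TM \to H(E)$ to be its inverse, and identify this with the horizontal lift. The inner computation is routine linear algebra once the setup is in place; the only point that deserves care is identifying the target with $E \times_M TM$ and noticing that $\tau_E$ contributes nothing along a single fiber, so that fiberwise invertibility reduces to the classical fact that $T\pi$ restricts to an isomorphism on any complement of $\ker(T\pi)$. I expect the smoothness of the inverse $h^{\uparrow}$, rather than its pointwise existence, to be the step requiring the most care, though it follows in the usual way from the smoothness of the horizontal subbundle $H(E)$ supplied by the connection.
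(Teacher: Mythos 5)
Your proof is correct and takes essentially the same approach as the paper's: both reduce to the pointwise linear-algebra fact that, since $H_e(E)$ is a complement to $V_e(E) = \ker(T_e\pi)$, the restriction $\restr{T_e\pi}{H_e(E)}$ is injective and (by surjectivity of $T_e\pi$) an isomorphism onto $T_{\pi(e)}M$, with $h^\uparrow$ defined fiberwise as its inverse. Your additional remarks—identifying the target with the pullback bundle $\pi^\ast TM$ and noting that smoothness of the inverse follows from the bundle structure—are refinements of points the paper's proof leaves implicit, not a different argument.
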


\begin{proof}
  Choose an $e \in E$ and let $H_e(E), V_e(E), T_eE$ denote the fibers of the
  horizontal bundle, the vertical bundle, and the tangent bundle above
  $e$.  As $T_eE = H_e(E) \oplus V_e(E)$ and $V_e(E) =
  \mathrm{kernel}(T_e\pi)$, we see that $\restr{T_e\pi}{H_e(E)}$ is
  injective.  Additionally, $\restr{T_e \pi}{H_e(E)}$ must also be
  surjective since $T_e\pi(T_eE) = T_mM$ where $m = \pi(e)$.  Thus
  $\restr{T_e \pi}{H_e(E)}$ is invertible.  We define $h^\uparrow: E
  \oplus TM \to H(E)$ by $h^\uparrow( e , \dot{m} ) =
  \restr{T_{e}\pi}{H_e(E)}^{-1}( \dot{m} )$.
  \end{proof}

We will occasionally use the notation $h^\uparrow_e( \dot{m} ) \equiv
h^\uparrow(e,\dot{m})$.  Additionally, the choice of a connection induces a covariant derivative.

\begin{definition}\label{def:cov_der}
  We define the \emph{vertical drop} to be the map, $v_{\downarrow} : V(E) \to
E$ given by
\[
v_{\downarrow} = \pi_2 \circ (v^{\uparrow})^{-1}
\]
where $\pi_2 : E \oplus E \to E$ is the projection onto the second $E$
component.  Let $I \subset \mathbb{R}$ be an interval so that we may
consider the set $C(I,E)$ consisting of curves from $I$ into $E$.
We define the \emph{covariant derivative} with respect to the
connection $H(E)$ to be the map, $\frac{D}{Dt} : C( I , E) \to E$,
given by
\[
  \frac{D e }{Dt} = v_{\downarrow} \left[ \mathrm{ver} \left(
    \frac{de}{dt} \right) \right]
\]
for a curve $e(\cdot) \in C(I,E)$.
\end{definition}
Lastly, the choice of a connection also provides two partial
derivatives on a vector bundle.
 
\begin{definition} \label{def:partial_derivatives}
  Let $\pi:E \to M$ be a vector bundle and let $f:E \to \mathbb{R}$ be
  a function.  Given a connection we define the partial derivative of $f$ with
  respect to $m$ as the vector bundle morphism, $\pder{f}{m} : E \to T^*M$,
  given by
\[
 \left \lb \pder{f}{m}(e) , \delta m \right \rb := \lb df(e) , h^\uparrow_e(
 \delta m ) \rb;
\]
see \cite{MTA}.
  Additionally we define the \emph{fiber derivative}, $\pder{f}{e}:E
  \to E^*$, given by
\[
\left \lb \pder{f}{e}(e) , e' \right \rb = \lb df(e) , v^\uparrow( e , e')
\rb := \restr{ \frac{d}{d\epsilon} }{\epsilon=0}\left( f( e + \epsilon e')\right).
\]
\end{definition}

These partial derivatives allow us to write down the differential
of a real-valued function in a new way.

\begin{prop} \label{prop:df=dmf+def}
  Let $f: E \to \mathbb{R}$.  Let $\delta e \in TE$ and $\delta m =
  T\pi( \delta e )$.  Then
\[
\lb df(e) , \delta e \rb = \left \lb \pder{f}{m}(e) , \delta m \right \rb + \left \lb \pder{f}{e}(e) , \frac{De(t)}{Dt} \right \rb.
\]
where $e(t)$ is a path in $E$ tangent to $\delta e$.
\end{prop}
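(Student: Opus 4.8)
The plan is to exploit the splitting $TE = H(E) \oplus V(E)$ furnished by the connection, together with the linearity of $df(e)$ on $T_e E$. First I would fix $e \in E$ with $m = \pi(e)$ and decompose the tangent vector as $\delta e = \hor(\delta e) + \ver(\delta e)$, so that
\[
\lb df(e) , \delta e \rb = \lb df(e) , \hor(\delta e) \rb + \lb df(e) , \ver(\delta e) \rb.
\]
The whole proof then reduces to identifying the horizontal summand with $h^\uparrow_e(\delta m)$ and the vertical summand with $v^\uparrow(e , \frac{De}{Dt})$, after which Definition \ref{def:partial_derivatives} rewrites the two pairings as $\lb \pder{f}{m}(e) , \delta m \rb$ and $\lb \pder{f}{e}(e) , \frac{De}{Dt} \rb$ respectively.

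For the horizontal term I would use that $\ver(\delta e) \in V(E) = \mathrm{kernel}(T\pi)$, so that applying $T\pi$ to the splitting gives $T\pi(\delta e) = T\pi(\hor(\delta e))$; in other words $\hor(\delta e)$ is a horizontal vector whose image under $T_e\pi$ is $\delta m$. Since $\restr{T_e\pi}{H_e(E)}$ is invertible by Proposition \ref{prop:horizontal_lift}, this forces $\hor(\delta e) = h^\uparrow_e(\delta m)$, and the first pairing becomes $\lb \pder{f}{m}(e), \delta m \rb$ by definition of the partial derivative with respect to $m$.

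For the vertical term I would choose a curve $e(t)$ tangent to $\delta e$, so that by Definition \ref{def:cov_der} the covariant derivative is $\frac{De}{Dt} = v_\downarrow[\ver(\delta e)]$; note that this depends only on $\ver(\delta e)$, hence only on $\delta e$, so the right-hand side is well-defined. Since $v_\downarrow = \pi_2 \circ (v^\uparrow)^{-1}$ and $\ver(\delta e) \in V_e(E)$, applying the vertical lift recovers $v^\uparrow(e, \frac{De}{Dt}) = \ver(\delta e)$ by the bijectivity in Proposition \ref{prop:vertical_lift}. The second pairing then becomes $\lb \pder{f}{e}(e), \frac{De}{Dt} \rb$, again directly from the definition of the fiber derivative. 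Summing the two contributions yields the claimed identity.

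The main obstacle — in fact the only subtlety — is to verify these two lift identifications cleanly: that $\hor(\delta e)$ is genuinely the unique horizontal representative projecting to $\delta m$, and that the vertical drop and vertical lift invert one another on the vertical summand. Both points rest entirely on the fiberwise invertibility statements of Propositions \ref{prop:vertical_lift} and \ref{prop:horizontal_lift}; once these are in hand, the result follows immediately from the linearity of $df(e)$ and the definitions of the two partial derivatives.
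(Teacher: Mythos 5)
Your proof is correct and takes essentially the same approach as the paper's: split $\delta e = \hor(\delta e) + \ver(\delta e)$ via the connection, identify $\hor(\delta e) = h^\uparrow_e(\delta m)$ and $\ver(\delta e) = v^\uparrow\left(e, \frac{De}{Dt}\right)$, and apply Definition \ref{def:partial_derivatives}. The only difference is that you explicitly justify the two lift identifications (via the invertibility statements of Propositions \ref{prop:vertical_lift} and \ref{prop:horizontal_lift}) which the paper simply asserts ``by definition.''
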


\begin{proof}
We observe that
\[
\lb df , \delta e \rb = \lb df , \mathrm{hor}(\delta e) \rb + \lb df ,
\mathrm{ver}(\delta e) \rb.
\]
  By definition, the first term is identically $\lb df ,
  h^{\uparrow}( T\pi(\delta e) ) \rb = \left \lb \pder{f}{m} , \delta m \right \rb$.
  The second term is 
\[
\left \lb df , \mathrm{ver}(\delta e) \right \rb = \left \lb df ,
  v^{\uparrow} \left( e , \frac{De(t)}{Dt} \right) \right \rb = \left \lb \pder{f}{e} ,
  \frac{De(t)}{Dt} \right \rb
\]
\end{proof}

The covariant derivative on curves in $E$ naturally induces a
covariant derivative on curves in $E^*$ given by the condition
\begin{align}
\frac{d}{dt} \lb \alpha , e \rb = \left \lb \frac{D \alpha }{Dt} , e \right \rb +
\left \lb \alpha , \frac{De}{Dt} \right \rb. \label{eq:cov_der2}
\end{align}

\begin{remark} Given vector bundles $E_1$ and $E_2$ with
horizontal sub-bundles $H(E_1)$ and $H(E_2)$ we can define a
horizontal sub-bundle by taking the direct sum: $H(E_1\oplus E_2) = H(E_1) \oplus H(E_2)$.  This
will be valuable later when dealing with equations of motion on the
direct sum of a Riemannian manifold and an adjoint bundle.\qedremark
\end{remark}

\subsection{Connections on Riemannian manifolds}

The two most important examples of connections (for us) are the
Levi-Civita connection of a Riemannian manifold, and a principal connection of a principal
bundle.  We will review these next.  To do this we introduce the
notion of an affine connection.

\begin{definition}
  An affine connection on a vector bundle $\pi:E \to M$ is a mapping,
  $\nabla: \mathfrak{X}(M) \oplus \Gamma(E) \to \Gamma(E)$ such that
  \begin{enumerate}
  \item $\nabla_{f X+g Y}(\sigma) = f \nabla_X \sigma + g \nabla_Y
    \sigma$ for $f,g \in C^{\infty}(M), X,Y \in \mathfrak{X}(M),
    \sigma \in \Gamma(E)$.
   \item $\nabla_{X}(\sigma_1 + \sigma_2) = \nabla_X\sigma_1 +
      \nabla_X \sigma_2$ for $\sigma_1,\sigma_2 \in \Gamma(E)$.
   \item $\nabla_{X}( f \sigma) = X[f] \sigma + f \nabla_X{\sigma}$
     for $f \in C^{\infty}(M), X \in \mathfrak{X}(M), \sigma \in \Gamma(E)$.
  \end{enumerate}
\end{definition}

Let $M$ be a Riemannian manifold with a metric $\lb \cdot , \cdot
\rb_M$.  A connection on $TM$ is a subspace, $H(TM) \subset TTM$ and
the covariant derivative is a map $\frac{D}{Dt}: C(I, TM) \to TM$.  The
covariant derivative uniquely defines an affine connection by the
equivalence $( \nabla_{X}(Y) )(m_0) = \frac{Dv}{Dt}$ where $v(t) =
Y(m(t))$ and $m(t)$ is the integral curve of $X$ with the initial
condition $m_0$ \cite[Section 2.7]{FOM}.  There is a one-to-one
correspondence between affine connections and covariant derivatives.

On a Riemannian manifold there is a
canonical affine connection which satisfies the following two extra properties:
\begin{enumerate}
\item $\nabla_X Y - \nabla_Y X = [X,Y]$ (torsion free).
\item $\lb \nabla_Z X , Y \rb_M + \lb X , \nabla_Z Y \rb_M = Z[ \lb X
  , Y \rb_M]$ (metric)
\end{enumerate}

for any $X,Y,Z \in \mathfrak{X}(M)$.  We call this unique affine connection
the Levi-Civita connection.

\begin{prop} \label{prop:da(v,w)=da(w)v-da(v)w}
  Let $\alpha \in \Omega^1(M)$.  A torsion-free connection (such as the Levi-Civita connection) on $TM$ induces
  the equivalence
\[
d\alpha( v , w ) = \left \lb \pder{\alpha}{x}(w) , v \right \rb - \left \lb
\pder{\alpha}{x}(v) , w \right \rb,
\]
where we are viewing $\alpha$ as an element of $C^{\infty}(TM)$ on the right hand side, and an element of $\Omega^1(M)$ on the left hand side.
\end{prop}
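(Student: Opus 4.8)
The plan is to reduce the statement to the classical fact that, for a torsion-free connection, the exterior derivative of a one-form equals the antisymmetrization of its covariant derivative. The first step is to reinterpret the right-hand side. Viewing $\alpha$ as a fiberwise-linear function $\hat{\alpha} \in C^{\infty}(TM)$, Definition~\ref{def:partial_derivatives} gives $\lb \pder{\alpha}{x}(w), v \rb = \lb d\hat{\alpha}(w), h^{\uparrow}_w(v) \rb$, where $h^{\uparrow}$ is the horizontal lift of Proposition~\ref{prop:horizontal_lift} and $d\hat{\alpha}(w) \in T^*_w(TM)$ is the differential of the \emph{function} $\hat{\alpha}$. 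Because a curve in $E = TM$ is horizontal exactly when it is parallel (Definition~\ref{def:cov_der}), this pairing is the derivative of $\hat{\alpha}$ along the parallel transport $W(t)$ of $w$ over a curve $c(t)$ with $\dot{c}(0) = v$. I would then differentiate $\hat{\alpha}(W(t)) = \lb \alpha_{c(t)}, W(t) \rb$ using the Leibniz rule \eqref{eq:cov_der2} for the induced connection on the dual bundle; since $W$ is parallel the $\lb \alpha, \frac{DW}{Dt} \rb$ term vanishes, and I obtain $\lb \pder{\alpha}{x}(w), v \rb = (\nabla_v \alpha)(w)$, where $\nabla$ is the affine connection corresponding to the chosen connection on $TM$.

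With this identification the right-hand side of the proposition becomes $(\nabla_v \alpha)(w) - (\nabla_w \alpha)(v)$. The second step is the standard computation: extending $v, w$ to vector fields $V, W$ and expanding each covariant derivative of a one-form via $(\nabla_V \alpha)(W) = V[\alpha(W)] - \alpha(\nabla_V W)$, I get
\[
(\nabla_V \alpha)(W) - (\nabla_W \alpha)(V) = V[\alpha(W)] - W[\alpha(V)] - \alpha(\nabla_V W - \nabla_W V).
\]
Invoking torsion-freeness, $\nabla_V W - \nabla_W V = [V,W]$, turns the last term into $-\alpha([V,W])$, so the expression equals $V[\alpha(W)] - W[\alpha(V)] - \alpha([V,W])$, which is precisely the invariant formula for $d\alpha(V,W)$. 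Evaluating at the base point $x$ recovers $d\alpha(v,w)$ and is manifestly independent of the chosen extensions, completing the argument.

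The hard part will be the first step: correctly matching the bundle-theoretic partial derivative $\pder{\alpha}{x}$ of Definition~\ref{def:partial_derivatives} with the covariant derivative $\nabla_v \alpha$, i.e.\ recognizing the horizontal lift as infinitesimal parallel transport and carefully applying \eqref{eq:cov_der2}. Once this translation is in hand the remaining calculation is routine, and the torsion-free hypothesis enters in exactly one place, namely to replace $\nabla_V W - \nabla_W V$ by $[V,W]$; this is why it cannot be dropped. As a sanity check I would also verify the identity in local coordinates, where $h^{\uparrow}_w(v) = v^i \bigl( \pder{}{x^i} - \Gamma^k_{ij} w^j \pder{}{\dot{x}^k} \bigr)$, observing that the Christoffel contributions to the two terms cancel precisely because $\Gamma^k_{ij}$ is symmetric in $i, j$, leaving $(\partial_i \alpha_j - \partial_j \alpha_i) v^i w^j = d\alpha(v,w)$.
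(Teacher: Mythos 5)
Your proposal is correct and takes essentially the same route as the paper: both proofs first translate the bundle-theoretic partial derivative $\pder{\alpha}{x}$ into the covariant derivative $\nabla \alpha$ (horizontal lift $=$ infinitesimal parallel transport, together with the dual Leibniz rule \eqref{eq:cov_der2}), and then conclude via the classical torsion-free identity $d\alpha(v,w) = (\nabla_v\alpha)(w) - (\nabla_w\alpha)(v)$.

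One difference is worth recording. In your first step you differentiate along the \emph{paired} direction $v$ while parallel-transporting $w$, i.e.\ $\lb \pder{\alpha}{x}(w), v \rb = (\nabla_v \alpha)(w)$; this is what Definition \ref{def:partial_derivatives} literally gives, since the horizontal lift $h^{\uparrow}_{w}(v)$ is tangent to the parallel transport of $w$ over a base curve with velocity $v$, and it reproduces exactly the sign stated in the proposition, as your coordinate sanity check confirms. The paper's own write-up of this step interchanges the two roles: it transports $Y$ along a curve with velocity $X(x)$, which amounts to the identification $\lb \pder{\alpha}{x}(X), Y \rb = (\nabla_X \alpha)(Y)$, and consequently its final displayed identity reads $d\alpha(X,Y) = \lb \pder{\alpha}{x}(X), Y \rb - \lb \pder{\alpha}{x}(Y), X \rb$, which is the statement with its two terms swapped. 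So while the overall strategy is identical, your handling of the translation step is the one that is internally consistent with both the definition of the partial derivative and the proposition as stated.
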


\begin{proof}
	Let $X, Y \in \mathfrak{X}(M)$.  By the definition of $\pder{\alpha}{x}$ viewed as a function on $TM$ we find
	\[
		\left \lb \pder{\alpha}{x}(X) , Y \right \rb (x) = \lb d_{TM} \alpha , h_{X(x)}^{\uparrow}( Y(x) ) \rb,
	\]
	where $d_{TM}$ is the exterior derivative on $TM$.   Note that $h_{X(x)}^{\uparrow}(Y(x))$ is a vector in $TTM$ over $X(x) \in TM$ obtained by parallel transporting the vector $Y(x)$ over a curve with velocity $X(x)$.  In other words
	\[
		\left \lb \pder{\alpha}{x}(X) , Y \right \rb (x) = \restr{ \frac{d}{dt} }{t=0}  \lb \alpha( \tilde{x}(t) ) , Y_t \rb 
	\]
	where $\tilde{x}(t)$ is an arbitrary curve such that $\tilde{x}(0) = x$ and $\restr{ \frac{d\tilde{x}}{dt} }{t=0} = X(x)$.  By the induced covariant derivative on curves of covectors we find
	\[
		\left \lb \pder{\alpha}{x}(X) , Y \right \rb (x) = \left\lb \restr{ \frac{D \alpha(\tilde{x}(t))}{Dt} }{t=0} , Y_0 \right\rb +  \left\lb \alpha(x) , \restr{\frac{DY_t}{Dt}}{t=0} \right\rb.
	\]
	However, as the $Y_t$ is parallel along $\tilde{x}(t)$ the second term is $0$.  Therefore
	\[
		\left \lb \pder{\alpha}{x}(X) , Y \right \rb (x) = \left\lb \restr{ \frac{D \alpha(\tilde{x}(t))}{Dt} }{t=0} , Y_0 \right \rb.
	\]
	As $Y_0 = Y(x)$ we see that we can use the identity $\frac{d}{dt} \left \lb \alpha, Y \right \rb = \left \lb \frac{D \alpha}{Dt} , Y \right \rb + \left \lb \alpha , \frac{DY}{Dt} \right \rb$ to rewrite the above equations as
	\[
		\left \lb \pder{\alpha}{x}(X),Y \right \rb(x) = \restr{ \frac{d}{dt} }{t=0}  \left\lb \alpha(x(t)), Y(x(t)) \right\rb - \left\lb \alpha , \restr{\frac{D}{Dt}}{t=0}(Y(x(t))) \right\rb.
	\]
	As $x \in M$ is arbitrary, we can write this in terms of the affine connection $\nabla$ as
	\[
		\left \lb \pder{\alpha}{x}(X),Y \right \rb = \mathcal{L}_X  \lb \alpha , Y \rb - \lb \alpha , \nabla_X Y \rb.
	\]
	If we swap $X$ and $Y$ and subtract these equations we find
	\[
		\left \lb \pder{\alpha}{x}(X) , Y \right \rb - \left\lb \pder{\alpha}{x} (Y) , X \right\rb =  \mathcal{L}_X \lb \alpha , Y \rb -  \mathcal{L}_Y \lb \alpha , X \rb - \lb \alpha , \nabla_X Y - \nabla_Y X \rb.
	\]
	By the torsion free property,
	\[
		\left \lb \pder{\alpha}{x}(X) , Y \right \rb - \left\lb \pder{\alpha}{x} (Y) , X \right\rb =  \mathcal{L}_X \lb \alpha , Y \rb -  \mathcal{L}_Y \lb \alpha , X \rb - \lb \alpha , [X,Y] \rb.
	\]
	However the right hand side is simply $d\alpha(X,Y)$.
\end{proof}
  Note that Proposition \ref{prop:da(v,w)=da(w)v-da(v)w} presupposes that the exterior derivative is well defined.  This is never a problem on finite-dimensional manifolds.  However, infinite-dimensional manifolds require that we deal with some functional analytic
  concerns which we would like to skip \cite[see Section 7.4]{MTA}.  One can
  verify that the above theorem is true in the infinite-dimensional
  case if the exterior derivative exists.  So we must add this
  existence assumption in the infinite dimensional case.

  Given a vector bundle $\pi : E \to M$ we may define the set of
  $E$-valued forms $\Omega^1(M;E) := \Gamma(E) \otimes\Omega^1(M)$.
  This allows us to define the exterior differential of an $E$ valued
  form by $d ( e \otimes \alpha) = e \otimes d\alpha$ for an arbitrary
  $e \in \Gamma(E)$ and $\alpha \in \Omega^1(M)$.  It is simple to see
  that using this definition for the differential of an $E$-valued
  form Proposition \ref{prop:da(v,w)=da(w)v-da(v)w} holds for
  $E$-valued forms as well.

\section{Principal connections} \label{app:connections}
In this appendix we show that the choice of a section, $\sigma : TB\to P$, of the anchor map, $\rho: P \to T B$, defines a principal connection on the principal $G$-bundle $\pi^Q_B: Q \to B$.
The material described here relies on \cite{KoNo1963}; see also \cite{CeMaRa2001}.
This manifestation of a section of the anchor map appeared in early work on particles immersed in fluids where such an object was called an ``interpolation method''   \cite{JaRaDe2012}.
For the sake of simplicity, we treat the case of an inviscid fluid; the extension to viscous flows is straightforward.

\subsection{The connection one-form}

Given a section $\sigma$, we define a $\mathfrak{g}$-valued one-form $A:TQ \to \mathfrak{g}$ by putting 
\begin{equation} \label{eq:conn_one_form_def}
	A(b, \dot{b}, \varphi, \dot{\varphi}) = T\varphi^{-1} \circ \dot{\varphi} - 
		\varphi^\ast [ \sigma(b,\dot{b}) ],
\end{equation}
 where $\varphi^{\ast} [\sigma(b,\dot{b})]$ is the pull-back of the vector field $\sigma(b, \dot{b} ) \in \mathfrak{X}( \bulk_b)$ by the map $\varphi: \bulk_{\rm ref} \to \bulk_{b}$.
To verify that this produces an element of $\mathfrak{g}$ note that $A$ can be written as 
\begin{equation} \label{conn_one_form_interpolation}
	A(b, \dot{b}, \varphi, \dot{\varphi}) = 	\varphi^\ast \left( \dot{\varphi} \circ \varphi^{-1} - \sigma(b,\dot{b}) \right), 
\end{equation}
where the expression between parentheses on the right-hand side is an element of $\mathfrak{X}_{\rm div}( \bulk_b)$.
Since $\varphi$ is volume-preserving and maps the boundary of $\bulk_{b_0}$ to that of $\bulk_{b}$, we have that the pullback vector field in $\mathfrak{X}_{\rm div}(\bulk_b)$ is an element of $\mathfrak{g} = \mathfrak{X}_{\rm div}( \bulk_{\rm ref})$.  We may also write this correspondence as 
\begin{equation} \label{MC_expression}
	A(b, \dot{b}, \varphi,\dot{\varphi}) = \varphi^\ast \left( \Theta_R(\varphi, \dot{\varphi}) - \sigma(b,\dot{b}) \right), 
\end{equation}
where $\Theta_R$ is the right Maurer-Cartan form, defined by $\Theta_R(\varphi, \dot{\varphi}) = \dot{\varphi} \circ \varphi^{-1}$.
While this expression is a bit more involved than the original definition, it will make the computation of the curvature of $A$ below easier.
Strictly speaking, $\Theta_R$ is not a Maurer-Cartan form in the usual sense, but rather lives on the Atiyah groupoid associated to the principal bundle $Q$.
It turns out, however, that $\Theta_R$ has all the properties required of it --- in particular, $\Theta_R$ has zero curvature --- so that we will not dwell on this point any further.
For more information about groupoid Maurer-Cartan forms, see \cite{FeSt2008}.

\begin{prop}
The one-form $A$ defined in \eqref{eq:conn_one_form_def} is a connection one-form.
\end{prop}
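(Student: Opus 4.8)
The plan is to verify the two defining axioms of a principal connection one-form on the right principal $G$-bundle $\pi^Q_B : Q \to B$, following \cite{KoNo1963}: first, that $A$ reproduces the infinitesimal generators of the $G$-action (the normalization axiom), and second, that $A$ is $\Ad$-equivariant with respect to the right action $R_\psi(b,\varphi) = (b, \varphi \circ \psi)$. Throughout I would work with the ``Eulerian'' form \eqref{conn_one_form_interpolation}, namely $A(b,\dot b, \varphi, \dot\varphi) = \varphi^*\big( \dot\varphi \circ \varphi^{-1} - \sigma(b,\dot b)\big)$, since the functoriality of the pullback makes both axioms transparent.

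For the normalization axiom, I would fix $\xi \in \mathfrak{g} = \X_{\mathrm{div}}(\bulk_{\rm ref})$ and compute the fundamental vector field $\xi_Q$ of the right action at a point $(b,\varphi)$. Differentiating $s \mapsto (b, \varphi \circ \exp(s\xi))$ at $s=0$ gives the tangent vector $(b, 0, \varphi, T\varphi \circ \xi)$, so that $\dot b = 0$ and the velocity of $\varphi$ is $\dot\varphi = T\varphi \circ \xi$. Substituting into $A$, the first term telescopes as $T\varphi^{-1}\circ T\varphi \circ \xi = \xi$, while the second term vanishes because $\sigma$ is fiberwise linear, hence $\sigma(b,0) = 0$ and $\varphi^*[\sigma(b,0)] = 0$. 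Thus $A(\xi_Q) = \xi$, as required.

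For equivariance I would use the tangent-lifted action recorded in Proposition \ref{prop:symmetry}, $TR_\psi(b,\dot b, \varphi, \dot\varphi) = (b, \dot b, \varphi\circ\psi, \dot\varphi \circ \psi)$. The key simplification is that the Eulerian velocity is invariant under $R_\psi$: since $(\varphi\circ\psi)^{-1} = \psi^{-1}\circ\varphi^{-1}$, one finds $(\dot\varphi\circ\psi)\circ(\varphi\circ\psi)^{-1} = \dot\varphi\circ\varphi^{-1}$, and $\sigma(b,\dot b)$ is untouched because $R_\psi$ fixes the base point $b$ together with its velocity $\dot b$. Hence, using the functoriality $(\varphi\circ\psi)^* = \psi^*\circ\varphi^*$, the whole expression factors as $A(TR_\psi(\cdot)) = \psi^*\big[A(\cdot)\big]$, and it remains only to identify this pullback with $\Ad_{\psi^{-1}}$.

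The main obstacle, and the only place where care is genuinely required, is reconciling the adjoint representation with the sign and side conventions fixed earlier in the paper. Because $G$ is a group under composition and its Lie bracket is taken to be the \emph{negative} of the Jacobi--Lie bracket, the adjoint action of $\psi$ on a vector field is the pushforward $\Ad_\psi \xi = \psi_* \xi$; consequently $\Ad_{\psi^{-1}}\xi = (\psi^{-1})_*\xi = \psi^*\xi$, which is precisely the pullback appearing in the previous step. I would confirm this identification by differentiating $s \mapsto \psi \circ \exp(s\xi) \circ \psi^{-1}$ at $s = 0$ to obtain $\Ad_\psi\xi = \psi_*\xi$. With both axioms established, $A$ is a principal connection one-form; the infinite-dimensional (Fr\'echet) nature of $G$ introduces no new difficulty here, since every manipulation is pointwise over the base and uses only composition and the chain rule, as the functional-analytic caveats made elsewhere in the paper permit.
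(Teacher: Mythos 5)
Your proposal is correct and follows essentially the same route as the paper: both verify the $\Ad$-equivariance (using $\Ad_{\psi}\xi = T\psi \circ \xi \circ \psi^{-1}$, so that the pullback $\psi^*$ appearing after the functoriality step is exactly $\Ad_{\psi^{-1}}$) and the normalization on infinitesimal generators $\xi_Q(b,\varphi) = (0, T\varphi \circ \xi)$, using $\sigma(b,0)=0$ by fiberwise linearity. The only cosmetic difference is that you organize the equivariance computation through the Eulerian form \eqref{conn_one_form_interpolation} while the paper computes directly from \eqref{eq:conn_one_form_def}; note also that $\Ad_\psi = \psi_*$ follows simply from differentiating conjugation, independently of the sign convention chosen for the Lie bracket.
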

\begin{proof}
We need to check two properties:
\begin{enumerate}
\item $A$ is (right) equivariant under the action of $G$ on $Q$: for $\psi \in G$ and $(b, \varphi, \dot{b}, \dot{\varphi}) \in Q$, we have 
\begin{align*}
	A(b, \varphi \circ \psi) \cdot (\dot{b}, \dot{\varphi} \circ \psi) 
		& = T (\varphi \circ \psi)^{-1} \circ \dot{\varphi} \circ \psi 
			- (\varphi \circ \psi)^\ast [ \sigma(b,\dot{b}) ] \\
		& = \psi^\ast ( T \varphi^{-1} \circ \dot{\varphi} - \sigma(b,\dot{b}) ) \\
		& = \mathrm{Ad}_{\psi^{-1}} \, [A(b, \varphi) \cdot (\dot{b}, \dot{\varphi})],
\end{align*}
where $\mathrm{Ad}_{\psi^{-1}} : \mathfrak{g} \to \mathfrak{g}$ is the adjoint action given by $\Ad_{\psi} ( \xi) = T\psi \circ \xi \circ \psi^{-1}$.

\item $A$ maps the infinitesimal generator $\xi_{Q}$ of an element $\xi \in \mathfrak{g}$ back to $\xi$. Recall that the infinitesimal generator is the vector field $\xi_{Q}$ on $Q$ given by 
\[
	\xi_{Q}(b, \varphi) = (0, T \varphi \circ \xi) \in T_{(b, \varphi)} Q,  
\]
for all $\xi \in \mathfrak{g}$ to $\xi$, so that 
\[
	A(b, \varphi) \cdot \xi_{Q}
		= T\varphi^{-1} \circ T\varphi \circ \xi - \sigma(0) = \xi. \qedhere
\]
\end{enumerate}
\end{proof}

Conversely, given a connection one-form $A$, we may define a section of the anchor map $\sigma$ as follows. For $(b, \dot{b}) \in TB$, choose $\varphi$ and $\dot{\varphi}$ to be compatible with $(b, \dot{b})$, that is, so that $(b, \varphi; \dot{b}, \dot{\varphi}) \in TQ$.  We then put 
\begin{equation} \label{eq:induced_interp_method}
	\sigma(b,\dot{b}) = \dot{\varphi} \circ \varphi^{-1} - \varphi_\ast [ A(b, \varphi) \cdot (\dot{b}, \dot{\varphi}) ].
\end{equation}
We first check that this prescription is well-defined, i.e. that $\sigma(b)$ does not depend on the choice of $\varphi$ and $\dot{\varphi}$. Let $\varphi'$ and $\dot{\varphi}'$ be different elements so that $(b, \varphi'; \dot{b}, \dot{\varphi}') \in TQ$. There then exists a diffeomorphism $\psi \in G$ and a vector field $\xi \in \mathfrak{g}$ so that $\varphi' = \varphi \circ \psi$ and $\dot{\varphi}' = \dot{\varphi} \circ \psi + T (\varphi \circ \psi) \circ \xi$. After some calculations, we then have that 
\[
	\dot{\varphi}' \circ \varphi'^{-1} = \dot{\varphi} \circ \varphi^{-1} + (\varphi \circ \psi)_\ast \xi 
\]
and
\[
	\varphi'_\ast [ A(b, \varphi') \cdot (\dot{b}, \dot{\varphi}') ] 
	= \varphi_\ast [A(b, \varphi) \cdot (\dot{b}, \dot{\varphi}) ] + (\varphi \circ \psi)_\ast \xi, 
\]
so that 
\[
	\dot{\varphi} \circ \varphi^{-1} - \varphi_\ast [ A(b, \varphi; \dot{b}, \dot{\varphi}) ]
 	= \dot{\varphi}' \circ \varphi'^{-1} - \varphi'_\ast [ A(b, \varphi'; \dot{b}, \dot{\varphi}') ], 
\]
i.e. $\sigma$ is well defined.

It is not hard to show that $\sigma$ satisfies the slip boundary condition \eqref{eq:P}, so that we arrive at the following result.
\begin{prop}
	The map $\sigma$ defined in \eqref{eq:induced_interp_method} is a section of the anchor map $\rho: P \to TB$.
\end{prop}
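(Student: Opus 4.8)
The plan is to verify that, for each $(b,\dot{b}) \in TB$, the prescription \eqref{eq:induced_interp_method} produces a genuine element of $P$; the defining identity $\rho \circ \sigma = \mathrm{id}$ of a section is then automatic, since the first two slots of $\sigma(b,\dot{b})$ are $(b,\dot{b})$ and the anchor $\rho$ is precisely the projection onto those slots. Independence of \eqref{eq:induced_interp_method} from the choice of compatible $(\varphi,\dot{\varphi})$ has just been established, so the only remaining task is to show that the vector field
\[
	u^H := \dot{\varphi} \circ \varphi^{-1} - \varphi_\ast [ A(b, \varphi) \cdot (\dot{b}, \dot{\varphi}) ]
\]
is divergence-free on $\bulk_b$ and satisfies the no-penetration condition of \eqref{eq:P}. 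In view of the characterization of $P$ in Proposition \ref{prop:symmetry}, these two properties are exactly what is needed.

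First I would dispatch the divergence-free requirement. By the description of $TQ$ preceding Proposition \ref{prop:symmetry}, the Eulerian velocity $\dot{\varphi} \circ \varphi^{-1}$ is a divergence-free vector field on $\bulk_b$. On the other hand, $A(b,\varphi) \cdot (\dot{b},\dot{\varphi})$ lies in $\mathfrak{g} = \mathfrak{X}_{\rm div}(\bulk_{\rm ref})$ by construction of the connection one-form, and since $\varphi : \bulk_{\rm ref} \to \bulk_b$ is volume-preserving, its pushforward $\varphi_\ast[A(b,\varphi) \cdot (\dot{b},\dot{\varphi})]$ is again divergence-free. Hence $u^H$, being a difference of two divergence-free fields, is divergence-free.

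The main point — and the only place where any care is required — is the boundary condition. I would write
\[
	u^H - \dot{b} \circ b^{-1} = \bigl( \dot{\varphi} \circ \varphi^{-1} - \dot{b} \circ b^{-1} \bigr) - \varphi_\ast[ A(b,\varphi) \cdot (\dot{b},\dot{\varphi}) ]
\]
and show that each summand on the right restricts to a field tangent to $\partial \bulk_b$. The first parenthesis is tangent to $\partial \bulk_b$ precisely because $(b,\dot{b},\dot{\varphi}\circ\varphi^{-1}) = \pi^{TQ}_P(b,\dot{b},\varphi,\dot{\varphi})$ already lies in $P$, so $\dot{\varphi}\circ\varphi^{-1}$ satisfies \eqref{eq:P} by Proposition \ref{prop:symmetry}. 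For the second summand I would use that every element of $\mathfrak{g}$, being in the Lie algebra of the diffeomorphism group $G = \SDiff(\bulk_{\rm ref};\bulk_{\rm ref})$ of $\bulk_{\rm ref}$, is tangent to $\partial \bulk_{\rm ref}$; since $\varphi$ maps $\partial \bulk_{\rm ref}$ diffeomorphically onto $\partial \bulk_b$, pushforward by $\varphi$ carries boundary-tangent fields to boundary-tangent fields. Combining the two observations yields $\left. (u^H - \dot{b}\circ b^{-1}) \right|_{\partial \bulk_b} \in \mathfrak{X}(\partial \bulk_b)$, which is exactly \eqref{eq:P}. Therefore $(b,\dot{b},u^H) \in P$, and $\sigma$ is a section of $\rho$, as claimed.
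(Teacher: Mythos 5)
Your proposal is correct and follows essentially the same route as the paper: the paper's argument consists of the well-definedness computation carried out just before the proposition, together with the remark that ``it is not hard to show'' that $\sigma$ satisfies the boundary condition \eqref{eq:P}. Your contribution is simply to spell out that last step --- the divergence-free property and the decomposition of $u^H - \dot{b}\circ b^{-1}$ into a boundary-tangent field from Proposition \ref{prop:symmetry} plus the pushforward by $\varphi$ of a boundary-tangent element of $\mathfrak{g}$ --- which is exactly the verification the paper leaves to the reader.
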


\subsection{The induced horizontal bundle.}

The horizontal sub-bundle induced by a connection one-form, or equivalently, by a section of $\rho$ is defined as follows. At every point $(b, \varphi) \in Q$, we define a horizontal subspace $H(b, \varphi) \subset T_{(b, \varphi)} Q$ by 
\[
	H(b, \varphi) = \mathrm{ker}\, A(b, \varphi),
\]
and we let $H(Q)$ be the disjoint union of all of these spaces, so that $H(Q)$ forms a sub-bundle of $T Q$.

In our case, we have that the elements of the kernel of $A(b, \varphi)$ are the tangent vectors $(\dot{b}, \dot{\varphi})$ which satisfy by \eqref{eq:induced_interp_method} that $\sigma(b,\dot{b}) = \dot{\varphi} \circ \varphi^{-1}$.  As a consequence, there exist horizontal and vertical projectors $\mathrm{hor}, \mathrm{ver} : T Q \to TQ$ given by 
\begin{align*}
	\mathrm{hor}(b, \varphi; \dot{b}, \dot{\varphi}) & = 
		(b, \varphi; \dot{b}, \sigma(b,\dot{b}) \circ \varphi) \\
	\mathrm{ver}(b, \varphi; \dot{b}, \dot{\varphi}) & = 
		(b, \varphi; 0, \dot{\varphi} - \sigma(b,\dot{b}) \circ \varphi).
\end{align*}	
The horizontal bundle is therefore spanned by vectors of the form $(b, \varphi; \dot{b}, \sigma(b,\dot{b}) \circ \varphi)$, where $(b, \varphi) \in Q$ and $\dot{b} \in T_b \body$ are arbitrary.  

\subsection{The curvature of $\sigma$.} \label{sec:curvature}

Using Cartan's structure formula, we have that the curvature of a (right) principal connection is given by the two-form $C$ on $Q$ with values in $\mathfrak{g}$, given by 
\[
	C = \mathbf{d} A + [A, A].
\]
We recall from \eqref{MC_expression} that the connection is given by $A = \Theta_R - \sigma$, and we use the fact that $\Theta_R$ has zero curvature: $\mathbf{d} \Theta_R + [\Theta_R, \Theta_R] = 0$. From this, it easily follows that the curvature $B$ can be expressed directly in terms of $\sigma$ by 
\[
	C = -\mathbf{d} \sigma + [\sigma, \sigma].
\]
Up to a sign, this is precisely the curvature tensor that appeared in Proposition~\eqref{prop:hor_var}.

\subsection{The induced covariant derivative.}

Lastly, we use the fact that a connection one-form $A$ induces a covariant derivative on the adjoint bundle $\tilde{\mathfrak{g}}$ to justify the expression \eqref{eq_cov_deriv} given previously. It is well-known (see e.g. \cite[Lemma~2.3.4]{CeMaRa2001}) that the covariant derivative of a curve $t \mapsto (b(t), \xi_b(t)) \in \tilde{\mathfrak{g}}$ is given by 
\[
	\frac{D}{Dt}(b, \xi_b) = \left(b, \frac{d \xi_b}{dt} + 
		[A(b, \varphi)\cdot(\dot{b}, \dot{\varphi}), \xi_b] \right), 
\]
where $(b, \varphi) \in Q$ projects down onto $b \in B$, and $(\dot{b}, \dot{\varphi}) \in T_{(b, \varphi)} Q$ is such that $\dot{\varphi} \circ \varphi^{-1} = \xi_b$. Substituting the expression \eqref{conn_one_form_interpolation} for $A$ in terms of the section $\sigma$, we obtain the formula for the covariant derivative given in Proposition~\ref{prop:cov_deriv}.

\bibliographystyle{alpha}
\bibliography{JaVa2013_JGM}

\end{document}